\pgfplotsset{compat=1.18}
\renewcommand{\paragraph}[1]{ {\bf #1.} }
\theoremstyle{thmstyleone}%
\newtheorem{theorem}{Theorem}
\newtheorem{proposition}[theorem]{Proposition}
\newtheorem{lemma}[theorem]{Lemma}%
\theoremstyle{thmstyletwo}%
\theoremstyle{thmstylethree}%
\begin{document}

\title[\footnotesize{An Efficient Framework for Global Non-Convex Polynomial Optimization with Nonlinear Polynomial Constraints}]{An Efficient Framework for Global Non-Convex Polynomial Optimization with Algebraic Constraints}


\author[1,2]{\fnm{Mitchell Tong} \sur{Harris}}\email{mitchh@mit.edu}
\author*[1]{\fnm{Pierre-David} \sur{Letourneau}}\email{pletourn@qti.qualcomm.com}
\author[1]{\fnm{Dalton} \sur{Jones}}\email{daltjone@qti.qualcomm.com}
\author[1]{\fnm{M. Harper} \sur{Langston}}\email{hlangsto@qti.qualcomm.com}

\affil[1]{\orgdiv{Qualcomm AI Research}\footnote{Qualcomm AI Research is an initiative of Qualcomm Technologies, Inc.}, \orgaddress{\street{5775 Morehouse Dr}, \city{San Diego}, \postcode{92121}, \state{CA}, \country{USA}}}
\affil[2]{Department of Mathematics, Massachusetts Institute of Technology, Cambridge, MA 02139}
\abstract{We present an efficient framework for solving algebraically-constrained global non-convex polynomial optimization problems over subsets of the hypercube. We prove the existence of an equivalent nonlinear reformulation of such problems that possesses essentially no spurious local minima.  Through numerical experiments on previously intractable global constrained polynomial optimization problems in high dimension, we show that polynomial scaling in dimension and degree is achievable when computing the optimal value and location.}
\blfootnote{The first author acknowledges support by the NSF Graduate Fellowship
under Grant No. 2141064.}

\keywords{non-convex optimization, global optimization, polynomial optimization, polynomial constraints, numerical optimization, constrained optimization.}


\maketitle
\section{Introduction}
\label{sec:introduction}
Due to the number of applications for which it can be leveraged, polynomial optimization is a longstanding and important problem in applied mathematics (e.g. \cite[Chapter 4]{Zhening2012Approx}). Works by Shor \cite{shor1987class}, Nesterov \cite{nesterov2000squared}, Parrilo \cite{parrilo2000structured} and Lasserre \cite{lasserre2001global} have provided tools for polynomial optimization that exploit polynomial structures and provide methods, which are more specialized than general nonlinear programming. In this work, we focus on making these tools for polynomial optimization more practical.

We concentrate on polynomial optimization over the hypercube and let $p, \{g^{(j)} \}_{j=1}^J$ be polynomials in $D$ variables and of degree at most $d$. We specifically consider the constrained non-convex polynomial optimization problem 
\begin{equation}\begin{aligned}
    \label{eq:problemcontinuous}
    \min_{x \in [-1,1]^D} & \;\; p(x) \\
    \mathrm{subject\, to} & \;\; g^{(j)}(x) = 0,\, j=1,...,J
\end{aligned}\end{equation}

A framework for efficiently solving \emph{unconstrained} polynomial optimization over the hypercube was proposed in \cite{Letourneau2023unconstrained}. Using inspiration from the ideas of Lasserre~\cite{lasserre2001global}, the authors demonstrated how such problems may be efficiently reformulated as problems involving a nonlinear objective over a convex cone of semi-definite matrices in $O(D\cdot d^2)$ dimensions, such that the reformulation possesses no spurious local minima (i.e., every local minimum is essentially a global minimum). 

This paper generalizes the reformulation presented in \cite{Letourneau2023unconstrained} to the case of constrained polynomial optimization problems over algebraic sets. Our generalized reformulation~\eqref{eq:reformulationcontinuous} is similar to that of~\cite{Letourneau2023unconstrained} with the addition of $J$ scalar constraints. This reformulation has similar guarantees. We have implemented an algorithm for solving the generalized reformulation in the \texttt{Julia} programming language and have produced positive numerical results.  Our contributions can be summarized as follows.
\begin{itemize}
    \item {\bf Efficient reformulations of algebraically-constrained polynomial optimization} We introduce a reformulation of~\eqref{eq:problemcontinuous}, which has a nonlinear objective with a feasible region, consisting of the intersection of the cone of semidefinite matrices, whose region is determined through scalar nonlinear constraints. This   reformulation is equivalent to the original problem in the sense that they share the same optimal value (Theorem \ref{thm:equivalence}) and optimal locations up to a canonical transformation (Theorem \ref{thm:globalmin}). The reformulation can be solved to optimality using descent techniques (due to the absence of local minima), an approach not generally possible with the original formulation.
    \item {\bf Efficient characterization of product measures over algebraic subsets of $[-1,1]^D$} We provide a complete and efficient characterization of the set of product measures supported over the set,
    $$ \left \{ x \in [-1,1]^D  \, : \, g^{(j)}(x) = 0, \, j=1,...,J \right \},$$ 
    via the moments of the measures using semidefinite and scalar nonlinear inequality constraints (Proposition \ref{prop:measure_extension_generalized}).
\end{itemize}

The remainder of this paper is structured as follows: Previous work is discussed in Section~\ref{subsec:previous_work}.  Section~\ref{sec:theory} introduces notation, theory, and our novel generalized reformulation of~\eqref{eq:problemcontinuous}. Section~\ref{sec:implementation} discusses algorithmic and technical aspects of our solver implementation in the \texttt{Julia} programming language. Section~\ref{sec:numerical} presents numerical results demonstrating correctness and performance of our framework. Section~\ref{sec:conclusion} presents conclusions and a discussion of future work. Proofs can be found in Appendix~\ref{sec:proofs}.

\subsection{Previous Work}
\label{subsec:previous_work}
While there are various approaches to polynomial optimization, this work relies heavily on the moment formulation for polynomial optimization introduced by Lasserre~\cite{lasserre2001global}. The moment formulation and its dual were connected to semidefinite programming (SDP) by \cite{lasserre2001global} and \cite{parrilo2000structured}, opening the door for practical computations. 

The idea of the moment formulation is as follows: rather than evaluating polynomials directly, the optimization is performed over \emph{measures}. The values of the objective and constraints are the integrals over the multidimensional measures, the moments of which are the integrals of the monomials in the original polynomial objective and constraints. Since using a finite-size semidefinite constraint to characterize a functional is necessary but not sufficient to ensure the moments actually come from a true measure, the SDP provides a relaxation to the original problem. 
The primary advantage of this approach is that the resulting problem is convex and can be solved relatively efficiently. However, rather than provide an exact reformulation,~\cite{lasserre2001global} results in a convex relaxation, where the solution provides a {\em lower bound} on the optimal minimum.  The quality of this bound depends on the degree of the Sums-Of-Squares (SOS)~\cite{doi:10.1137/040614141,prajna2002introducing} polynomials used to generate the relaxation. Increasing the size of the SDP provides better relaxations and bounds, but adversely affects scaling. In~\cite{lasserre2001global,Parrilo2003}, it was demonstrated that any sequence of problems of increasing size (``SOS degree'') will eventually converge to an exact solution in finite time (the sequence of problems is known as the ``Lasserre Hierarchy''). Although the hierarchy has been found to converge relatively rapidly on some specific problems (see, e.g., \cite{de2022convergence}), theoretical estimates indicate that super-exponential sizes are required in the worst case~\cite{Klerk2011}.



Degree, and ultimately scaling, estimates for the approach of~\cite{lasserre2001global}  rely on results by Putinar~\cite{putinar1993positive} and Nie and Schweighofer~\cite{nie2007complexity}, which express positive polynomials over compact semi-algebraic subsets of Euclidean spaces through SOS-like representations of finite degree, demonstrating that this can be done in any dimension.  However, the estimates demand a super-exponential SOS degree of the original problem in both dimension and degree, resulting in estimates suggesting intractable cost for solving polynomial optimization problems using~\cite{lasserre2001global}'s framework in the general case. 

The work of~\cite{Letourneau2023unconstrained} proposed a modification of~\cite{lasserre2001global} to improve scaling for the case of polynomial optimization over the hypercube. Instead of optimizing over general measures, the authors restrict to a sub-class of measures (the convex combination of product measures). The advantage is that characterizing such measures requires significantly smaller semidefinite constraints. The disadvantage is that the problem is no longer convex; nonetheless, there is always a guaranteed feasible descent direction starting from any non-optimal feasible point.

Our current work consists in a non-trivial generalization of these ideas in~\cite{Letourneau2023unconstrained}, extended to the case of polynomial optimization over the intersection of a general algebraic set and the hypercube. Our framework and results overcome the drawbacks of~\cite{lasserre2001global} in the context of constrained polynomial optimization by once again restricting the space of feasible 
measures. We show this restricted space can be represented efficiently in Proposition~\ref{prop:measure_extension_generalized}. In particular, we provide an explicit and efficient characterization of the set of product measures supported over, 
$$ \mathcal{S} :=  \left \{ x \in [-1,1]^D  \, : \, g^{(j)}(x) = 0, \, j=1,...,J \right \}.$$
In this generalized reformulation, we preserve the guarantees provided by~\cite{Letourneau2023unconstrained}. Further, our conclusions about efficiency are much stronger than the general case considered in~\cite{lasserre2001global}, highlighting the practicality of our generalized reformulation.

\section{Theory}
\label{sec:theory}
Let $p, \{g^{(j)} \}_{j=1}^J$ be polynomials in $D$ variables and of degree at most $d$. We consider the constrained non-convex polynomial optimization problem, 
\begin{equation*}\begin{aligned}
    \min_{x \in [-1,1]^D} & \;\; p(x) \\
    \mathrm{subject\, to} & \;\; g^{(j)}(x) = 0,\, j=1,...,J.
\end{aligned}\end{equation*}

\subsection{Preliminaries}
\label{sec:notation}
\paragraph{Polynomials} Let the \emph{dimension} of~\eqref{eq:problemcontinuous} be $D$, and let $d$ be the \emph{problem degree} (i.e., the maximum degree of the objective and constraint polynomials). A $D$-dimensional \emph{multi-index} $n=(n_1, n_2, ..., n_D)$ is a $D$-tuple of nonnegative integers. Let $\lvert n \rvert = \sum_{i=1}^D n_i$ be the \emph{multi-index degree}. A \emph{monomial} is written as,
\begin{equation}
    x^n = \prod_{i=1}^D x_i^{n_i}.
\end{equation}
Unless otherwise stated, all polynomials are written in the monomial basis.
The vector space of polynomials of degree $d$ in dimension $D$ is denoted $\mathbb{P}_{D,d}$; i.e., 
\begin{equation}
\label{eq:P_Dd}
    \mathbb{P}_{D,d} := \left \{ \sum_{\substack{\lvert n \rvert \leq d\\ \,n \in \mathbb{N}^D}} \, p_n \, x^n : \, p_n \in \mathbb{R} \right \},
\end{equation}
where the summation is over all multi-indices of degree less than or  equal to $d$. In particular, note that $\mathbb{P}_{D,d}$ has dimension
\begin{equation}
    \# \{ n \in \mathbb{N}^D \, : \, \lvert n \rvert \leq d \} = \binom{D+d}{d} \leq \min \left ( D^d, d^D \right ).
\end{equation}
Hence, for a fixed degree $d$, the dimension of the vector space of polynomials $\mathbb{P}_{D,d}$ grows \emph{polynomially} in the problem dimension $D$. The notation $\mathbb{P}_{D,\infty}$ is used to represent polynomials of any finite degree. The \emph{support of a polynomial} $p \in \mathbb{P}_{D,\infty}$ corresponds to the locations of its nonzero coefficients; i.e.,
\begin{equation}
    \mathrm{supp}(p) := \{ n \in \mathbb{N}^D \, : \, p_n \not = 0 \}, 
\end{equation}
and the cardinality (number of elements) of the support is denoted by
\begin{equation}
    N (p) := \# \, \mathrm{supp}(p).
\end{equation}
\paragraph{Moments of measures}
The set of regular Borel measures over $\mathbb{R}^D$ (\cite{cohn2013measure}) is denoted as $\mathbb{B}_{D}$, and
the \emph{support of a measure} $\mu(\cdot) \in \mathbb{B}_D$ is designated by $\mathrm{supp}(\mu(\cdot))$. The support represents the smallest set such that $\mu \left ( \mathbb{R}^D \backslash \mathrm{supp}(\mu(\cdot)) \right )  = 0$. 
For a fixed multi-index $n$, a $D$-dimensional Borel measure in $\mathbb{B}_{D}$ has an $n^{th}$ \emph{moment} defined to be the multi-dimensional Lebesgue integral,
\begin{equation}
    \mu_n := \int_{\mathbb{R}^D} x^n \, \mathrm{d} \mu(x).
\end{equation}
When referring to a measure, parentheses are employed (e.g., $\mu(\cdot)$). If the parentheses are absent (e.g., $\mu$), we are referring to a vector of moments associated with the measure. Let $X_i$ be measureable subsets of $\mathbb{R}$. A \emph{regular product measure} is a regular Borel measure of the form,
\begin{equation}
    \mu(X_1 \times \cdots \times X_D) = \prod_{i=1}^D  \mu_i(X_i) ,
\end{equation}
where each factor $\mu_i(\cdot)$ belongs to $\mathbb{B}_1$. In this context, the \emph{moments of a product measure} up to degree $d$ correspond to a $D$-tuple of real vectors $( \mu_1, \mu_2, ..., \mu_D ) \in \mathbb{R}^{(2d+1) \times D}$,\footnote{We consider the first $2d$, rather than $d$, moments for reason that will become apparent in Proposition \ref{prop:measure_extension_generalized} below.} where each element is defined as,
\begin{equation}
    \mu_{i,n_i} := \int_{\mathbb{R} } x_i^{n_i} \, \mathrm{d} \mu_i (x_i).
\end{equation}
The vectors $\mu_i$ refer to the vector of \emph{moments of the 1D measure} $\mu_i(\cdot)$.  With a single sub-index, $\mu_i \in \mathbb{R}^{2d+1}$ represents the vector of moments, and with two sub-indices, $\mu_{i,n_i} \in \mathbb{R}$  represents scalar moments (vector elements) of a 1D measure.  In this work, we are interested in collections of $L$ measures that are each products of 1D measures. Those moments are all represented by the $(2d+1) \times D \times L$ real numbers $\mu = \{\mu_i^{(l)}\}_{l,i}$. Finally, we introduce the following notation to represent the \emph{moments of sums of product measures}:
\begin{equation}
    \label{eq:sumprodmeasure}
    \phi_n (\mu)  = \sum_{l=1}^L  \prod_{i=1}^D \mu^{(l)}_{i,n_i}.
\end{equation}
\paragraph{Moment matrices}
Let $\mu_i$ be a sequence of moments of a 1D measure. The \emph{degree-$d$ 1D moment matrix} (or \emph{moment matrix}) associated with a polynomial $h \in \mathbb{P}_{1,d^\prime}$ is defined to be the $(d +1) \times (d  +1) $ matrix $\mathcal{M}_d(\mu_i, h)$ with entries,
 \begin{equation}
     [ \mathcal{M}_d(\mu_i, h) ]_{m,n} = \int h(x_i) \, x_i^{m + n} \, \mathrm{d} \mu_i(x_i) = \sum_{k_i=0}^{d^\prime} h_{k_i} \, \mu_{k_i+m+n},
 \end{equation}
for $0 \leq m, n \leq d $. When $g(\cdot ) \equiv 1$, we use the short-hand notation,
\begin{equation}
    \mathcal{M}_d(\mu_i; 1) = \mathcal{M}_d(\mu_i).
\end{equation}
The construction of such matrices requires the knowledge of only the first $2 d+ d^\prime + 1$ moments of $\mu_i (\cdot) $; i.e., $\{ \mu_{i,n}\}_{n=0}^{2d + d^\prime} $. 

\subsection{Reformulation on the hypercube}
\label{sec:reformulation_original}

This section summarizes results presented in \cite{Letourneau2023unconstrained} that underly this work. In \cite{Letourneau2023unconstrained}, problems of the form,
\begin{equation}
\label{eq:problemcontinuous_orig}
    \min_{x \in [-1,1]^D } \, p(x),
\end{equation}
were treated. That work proposed the following reformulation of Problem~\eqref{eq:problemcontinuous_orig}:
\begin{equation}
\begin{aligned}
\label{eq:reformulationcontinuous}
    \min_{  \mu \in \mathbb{R}^{(2d+1)\times D \times L} }&\;\; \sum_{n \in \mathrm{supp}(p)} \, p_n \, \phi_n (\mu)  \\
    \mathrm{subject\, to} &  \;\; \mathcal{M}_d (\mu^{(l)}_i) \succeq 0,  \\
    &\;\; \mathcal{M}_{d-1} (\mu^{(l)}_i; \, 1-x_i^2) \succeq 0,  \;   \\
    &\;\; \phi_{(0,...,0)}  (\mu)  = 1,  \\
     &\phi_n (\mu)  = \sum_{l=1}^L  \prod_{i=1}^D \mu^{(l)}_{i,n_i},
\end{aligned} 
\end{equation}
where the moments $\phi_n (\mu) $ are defined in Equation~\eqref{eq:sumprodmeasure}, and the constraints are over all $i=1,\dots, D$ and $l=1,\dots, L$. It was shown in~\cite{Letourneau2023unconstrained} that the reformulated problem is equivalent to the original in that they share the same global optimal value and locations (under a canonical correspondence). This paper presents a generalization of this reformulation, where polynomial constraints are added to Problem~\ref{eq:problemcontinuous_orig} (Section \ref{sec:reformulation_general}). 

The proof of the aforementioned equivalence relies heavily on \cite[Proposition 1]{Letourneau2023unconstrained}, which states that to every element in the feasible set of Problem \ref{eq:reformulationcontinuous}, there exists a corresponding Borel measure with moments $\{ \phi_n (\mu) \}$ for all $0 \leq n_i \leq d$; i.e., the multi-dimensional moment problem can be solved efficiently over the hypercube when sums of product measures are concerned.

One contribution of this paper is a generalization of Proposition 1 from \cite{Letourneau2023unconstrained}, expanding the result to the moment problem over algebraic subsets of $[-1,1]^D$. This result is stated in Proposition \ref{prop:measure_extension_generalized} below.

\subsection{Generalized reformulation for polynomial constraints}
\label{sec:reformulation_general}
Under our formulation, algebraic constraints on the support of finite measures are enforced by requiring that for all $j$,
\begin{equation}
    \int   \left ( g^{(j)}(x) \right )^2 \mathrm{d}\mu(x) = 0.
\end{equation}
When satisfied, this constraint guarantees that the measure $\mu(\cdot)$ is supported over the set,
\begin{equation}
    \mathcal{S} :=  \left \{ x \in [-1,1]^D  \, : \, g^{(j)}(x) = 0, \, j=1,...,J \right \}.
\end{equation}
The formal statement and proof of this claim are presented in the Proposition~\ref{prop:positivepolysupport} (Appendix \ref{sec:proofs}). This also forms the basis for Proposition~\ref{prop:measure_extension_generalized} where we characterize product measures with such a support from their moments.

Having intuitively established the rationale behind the characterization of product measures over elgebraic sets, we may now introduce our generalized formulation: let $\gamma^{(j)}$ be the vector of coefficients of the polynomial $ \left ( g^{(j)}(\cdot) \right )^2$ in a monomial basis. The generalized reformulated problem can be stated as
\begin{align}
\begin{split}
\label{eq:reformulationcontinuous_generalized}
    \min_{  \mu \in \mathbb{R}^{(d+1)\times D \times L} }&\;\; \sum_{n \in \mathrm{supp}(p)} \, p_n \, \phi_n (\mu)  \\
    \mathrm{subject\, to} &  \;\; \mathcal{M}_d (\mu^{(l)}_i) \succeq 0,  \\
    &\;\; \mathcal{M}_{d-1} (\mu^{(l)}_i; \, 1-x_i^2) \succeq 0,  \;\;\;  \\
    &\;\; \gamma^{(j)} \cdot \phi(\mu^{(l)}) = 0\\
    &\;\; \phi_{(0,...,0)}  (\mu)  = 1 \\
   & \phi_n (\mu)  = \sum_{l=1}^L  \prod_{i=1}^D \mu^{(l)}_{i,n_i},
\end{split} 
\end{align}
where the constraints are over all $i=1,\dots, D$,  $l=1,\dots, L$, and  $j=1,\dots,J$, $\cdot$ indicates inner product, and $\phi(\mu^{(l)})$ is a vector of moments of the $l^{\mathrm{th}}$ product measure; i.e., $\phi_n(\mu^{(l)}) = \prod_{i=1}^D \mu^{(l)}_{i,n_i} $ .

This reformulated Problem~\eqref{eq:reformulationcontinuous_generalized} is equivalent to Problem~\ref{eq:problemcontinuous} in that it shares the same optimal value and optimal locations under a canonical map (see Theorems~\ref{thm:equivalence} and~\ref{thm:globalmin} below). These results are a consequence of the following Proposition~\ref{prop:measure_extension_generalized}, which states that for any moment vector satisfying the constraints of the reformulation, there is a measure whose support is contained in the original feasible region.

\begin{restatable}{proposition}{measureextensiongeneralized}
\label{prop:measure_extension_generalized}
    Let $D,d \in \mathbb{N}$ and $\mu := ( \mu_1, \mu_2, ..., \mu_D ) \in \mathbb{R}^{(2d+1) \times D}$ be such that for each $i = 1, ..., D$, $\mu_{i,0} = 1$, and
    \begin{align*}
        \mathcal{M}_d(\mu_i) &\succeq 0,\\
        \mathcal{M}_{d-1}(\mu_i;  1-x_i^2) &\succeq 0 .
    \end{align*}
    and in addition for every $j=1, ..., J$,
    \begin{equation}
        \label{eq:newconstr}
        \gamma^{(j)} \cdot \phi(\mu)  = 0,
    \end{equation}
    where $ \gamma^{(j)}$ is the vector of polynomial coefficients or $\left ( g^{(j)}(x) \right )^2$. Then there exists a regular Borel product measure,
    \begin{equation*}
        \mu( \cdot ) := \prod_{i=1}^D  \mu_i(\cdot), 
    \end{equation*}
    supported over the algebraic set
    \begin{equation}
        \label{eq:def_semialgebraic}
        \mathcal{S}:= \cap_{j=1}^J \left \{ x \in [-1,1]^D \, : \, g^{(j)}(x) = 0 \right \} 
    \end{equation}
    such that $\mu \left ( \mathcal{S} \right )  = 1$. Further, for all multi-indices $n \in \mathbb{N}^D$ such that for every $i =1,\dots, D$, $0 \leq n_i  \leq d$, we have
    \begin{equation*}
         \int_{[-1,1]^D} \, x^n \, \left (  \prod_{i=1}^D \mu_i(x_i) \right ) \, \mathrm{d} x =  \prod_{i=1}^D \mu_{i,n_i}.
    \end{equation*}
\end{restatable}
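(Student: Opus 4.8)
The plan is to construct the product measure from its one-dimensional factors using the known solution of the truncated moment problem on the interval $[-1,1]$, and then to read off the support condition from the single scalar identity $\gamma^{(j)} \cdot \phi(\mu) = 0$ by recognizing its left-hand side as $\int \bigl(g^{(j)}\bigr)^2\,\mathrm{d}\mu$, the integral of a nonnegative polynomial against a nonnegative measure.

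First, for each $i = 1, \dots, D$ I would apply \cite[Proposition 1]{Letourneau2023unconstrained} in its one-dimensional form (equivalently, the classical solution of the truncated Hausdorff moment problem on $[-1,1]$): since $\mathcal{M}_d(\mu_i) \succeq 0$ and $\mathcal{M}_{d-1}(\mu_i; 1-x_i^2) \succeq 0$, and these two matrices jointly involve precisely the moments $\mu_{i,0}, \dots, \mu_{i,2d}$, there exists a nonnegative regular Borel measure $\mu_i(\cdot)$ with $\mathrm{supp}(\mu_i(\cdot)) \subseteq [-1,1]$ and $\int x_i^{k}\,\mathrm{d}\mu_i(x_i) = \mu_{i,k}$ for all $0 \le k \le 2d$; matching the data up to degree $2d$ rather than merely $d$ is exactly what the extra moments are for, and it is needed below. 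Setting $\mu(\cdot) := \prod_{i=1}^D \mu_i(\cdot)$ gives a regular Borel measure on $[-1,1]^D$ (a finite product of regular Borel measures over compact metric spaces) with total mass $\prod_i \mu_{i,0} = 1$ and $\mathrm{supp}(\mu(\cdot)) \subseteq [-1,1]^D$. By Tonelli's theorem, for every multi-index $n$ with $n_i \le 2d$ for all $i$,
\[
\int_{[-1,1]^D} x^n\,\mathrm{d}\mu(x) \;=\; \prod_{i=1}^D \int_{[-1,1]} x_i^{n_i}\,\mathrm{d}\mu_i(x_i) \;=\; \prod_{i=1}^D \mu_{i,n_i} \;=\; \phi_n(\mu),
\]
which, specialized to $n_i \le d$, is the final moment identity claimed in the proposition.

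Next I would invoke the algebraic constraints. Since $g^{(j)} \in \mathbb{P}_{D,d}$, the polynomial $\bigl(g^{(j)}\bigr)^2$ has every monomial exponent at most $2d$, so the displayed identity applies to each of its monomials; writing $\bigl(g^{(j)}(x)\bigr)^2 = \sum_n \gamma^{(j)}_n x^n$ and integrating term by term,
\[
\int_{[-1,1]^D} \bigl(g^{(j)}(x)\bigr)^2\,\mathrm{d}\mu(x) \;=\; \sum_n \gamma^{(j)}_n\,\phi_n(\mu) \;=\; \gamma^{(j)} \cdot \phi(\mu) \;=\; 0 .
\]
As $\bigl(g^{(j)}\bigr)^2 \ge 0$ pointwise and $\mu(\cdot) \ge 0$, this forces $g^{(j)} = 0$ $\mu$-almost everywhere, i.e. the open set $U_j := \{x \in [-1,1]^D : g^{(j)}(x) \neq 0\}$ satisfies $\mu(U_j) = 0$. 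Therefore $\mu\bigl([-1,1]^D \setminus \mathcal{S}\bigr) = \mu\bigl(\bigcup_{j=1}^J U_j\bigr) \le \sum_{j=1}^J \mu(U_j) = 0$, so $\mu(\mathcal{S}) = 1$ and in fact $\mathrm{supp}(\mu(\cdot)) \subseteq \mathcal{S}$.

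The Tonelli computation and the support bookkeeping are routine. The one delicate point — and the reason the formulation carries $2d+1$ moments per coordinate rather than $d+1$ — is ensuring that $\gamma^{(j)} \cdot \phi(\mu)$ really equals $\int \bigl(g^{(j)}\bigr)^2\,\mathrm{d}\mu$; this rests on the one-dimensional representing measures reproducing the prescribed moments all the way up to order $2d$. So the main (and essentially only nontrivial) obstacle is invoking the truncated interval moment problem at the correct degree: one must argue that the PSD conditions on $\mathcal{M}_d(\mu_i)$ and on the localizing matrix $\mathcal{M}_{d-1}(\mu_i; 1-x_i^2)$ already guarantee a measure on $[-1,1]$ whose moments of order $\le 2d$ coincide with $\mu_{i,\cdot}$ — a classical fact for the interval (obtainable via a degree-controlled Positivstellensatz on $[-1,1]$ together with duality, or via a Gauss-type quadrature / flat-extension argument) that underlies \cite[Proposition 1]{Letourneau2023unconstrained}.
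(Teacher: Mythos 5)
Your proposal is correct and takes essentially the same route as the paper: existence of a representing product measure on $[-1,1]^D$ from the moment-matrix conditions (the paper cites Proposition~1 of \cite{Letourneau2023unconstrained} wholesale, you reassemble it coordinate-wise via Tonelli), identification of $\gamma^{(j)}\cdot\phi(\mu)$ with $\int \bigl(g^{(j)}\bigr)^2\,\mathrm{d}\mu$, and the conclusion that a nonnegative integrand with zero integral vanishes $\mu$-a.e., which the paper packages as Proposition~\ref{prop:positivepolysupport} via the sets $A_n$ and monotone convergence. If anything, you are more explicit than the paper's ``by construction'' step about why the representing measure must reproduce the prescribed moments up to order $2d$ in each variable (the reason the data carries $2d+1$ moments per coordinate), so no gap.
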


The equivalence of the generalized formulation with Problem \ref{eq:problemcontinuous} carries verbatim from \cite{Letourneau2023unconstrained} as stated in the following theorems. A discussion of the proofs may be found in Appendix \ref{sec:proofs_analoguous}. The first theorem states that the global minima is unchanged, and the second theorem provides a characterization of global optimality. 
\begin{restatable}{theorem}{equivalence}
    \label{thm:equivalence}
    Problem \ref{eq:problemcontinuous} and Problem \ref{eq:reformulationcontinuous_generalized} share the same global minimum value.
\end{restatable}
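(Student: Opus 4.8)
The plan is to prove the two inequalities $q^\star \le p^\star$ and $q^\star \ge p^\star$, where $p^\star$ is the optimal value of Problem~\eqref{eq:problemcontinuous} and $q^\star$ that of the reformulation~\eqref{eq:reformulationcontinuous_generalized}. We may assume $\mathcal{S} \neq \emptyset$, since otherwise both problems are infeasible; as $\mathcal{S}$ is compact and $p$ continuous, Problem~\eqref{eq:problemcontinuous} attains its value at some $x^\star \in \mathcal{S}$.

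\emph{The bound $q^\star \le p^\star$.} I would exhibit a feasible point of~\eqref{eq:reformulationcontinuous_generalized} whose objective equals $p(x^\star)$, namely the moments of $\prod_{i=1}^D \delta_{x_i^\star}$: set $\mu_{i,n_i}^{(1)} = (x_i^\star)^{n_i}$ and, if $L>1$, $\mu^{(l)} \equiv 0$ for $l \ge 2$. For each $i$, $\mathcal{M}_d(\mu_i^{(1)}) = v v^\top$ with $v = (1, x_i^\star, \dots, (x_i^\star)^d)^\top$ is positive semidefinite, and $\mathcal{M}_{d-1}(\mu_i^{(1)}; 1-x_i^2) = \bigl(1-(x_i^\star)^2\bigr)\, w w^\top \succeq 0$ since $x^\star \in [-1,1]^D$; moreover $\gamma^{(j)} \cdot \phi(\mu^{(1)}) = \sum_n \gamma_n^{(j)} (x^\star)^n = \bigl(g^{(j)}(x^\star)\bigr)^2 = 0$ because $x^\star \in \mathcal{S}$, the zero factors contribute $\gamma^{(j)} \cdot \phi(0) = 0$, and $\phi_{(0,\dots,0)}(\mu) = 1$. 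The objective at this point is $\sum_{n \in \mathrm{supp}(p)} p_n \prod_i (x_i^\star)^{n_i} = p(x^\star) = p^\star$, so $q^\star \le p^\star$ and the reformulation's value is attained.

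\emph{The bound $q^\star \ge p^\star$.} I would take an arbitrary feasible $\mu$ of~\eqref{eq:reformulationcontinuous_generalized} and build a probability measure on $\mathcal{S}$. For each $l$ put $c_l := \prod_{i=1}^D \mu_{i,0}^{(l)} \ge 0$. If $c_l = 0$, then $\mu_{i,0}^{(l)} = 0$ for some $i$; since the $(0,0)$ entry of the PSD matrix $\mathcal{M}_d(\mu_i^{(l)})$ vanishes, so does its first row, hence $\phi_n(\mu^{(l)}) = 0$ for all $n$ with $n_i \le d$, and this index contributes nothing to the objective or to $\phi_{(0,\dots,0)}$. If $c_l > 0$, rescale to $\tilde\mu_i^{(l)} := \mu_i^{(l)}/\mu_{i,0}^{(l)}$; this preserves $\mathcal{M}_d(\tilde\mu_i^{(l)}) \succeq 0$ and $\mathcal{M}_{d-1}(\tilde\mu_i^{(l)}; 1-x_i^2) \succeq 0$, preserves $\gamma^{(j)} \cdot \phi(\tilde\mu^{(l)}) = 0$ (the left-hand side scales by $1/c_l$), and now $\tilde\mu_{i,0}^{(l)} = 1$ for every $i$. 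Applying Proposition~\ref{prop:measure_extension_generalized} to $\tilde\mu^{(l)}$ yields a Borel product probability measure $\nu^{(l)}(\cdot)$ with $\nu^{(l)}(\mathcal{S}) = 1$ and $\int x^n \, \mathrm{d}\nu^{(l)} = \prod_i \tilde\mu_{i,n_i}^{(l)}$ for all $n$ with $n_i \le d$. Set $\nu(\cdot) := \sum_{l : c_l > 0} c_l\, \nu^{(l)}(\cdot)$, a non-negative Borel measure on $\mathcal{S}$ of total mass $\sum_l c_l = \phi_{(0,\dots,0)}(\mu) = 1$. Since $\deg p \le d$, every $n \in \mathrm{supp}(p)$ has $n_i \le d$ for all $i$, so the reformulation objective equals $\sum_{n} p_n \sum_l c_l \prod_i \tilde\mu_{i,n_i}^{(l)} = \int_{\mathcal{S}} p\, \mathrm{d}\nu \ge \min_{x \in \mathcal{S}} p(x) = p^\star$. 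Taking the infimum over feasible $\mu$ gives $q^\star \ge p^\star$, and combining the two bounds, $q^\star = p^\star$.

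\emph{Anticipated main obstacle.} The delicate point lies entirely in the second direction: Proposition~\ref{prop:measure_extension_generalized} requires the coordinatewise normalization $\mu_{i,0} = 1$ for every $i$, whereas~\eqref{eq:reformulationcontinuous_generalized} imposes only the single joint constraint $\phi_{(0,\dots,0)}(\mu) = 1$. One must therefore justify the per-factor rescaling above --- verifying it leaves the semidefinite and $\gamma^{(j)}$ constraints intact --- and carefully dispose of degenerate product measures of zero mass. Beyond that, the argument is the same bookkeeping as in the unconstrained setting of~\cite{Letourneau2023unconstrained}, with Proposition~\ref{prop:measure_extension_generalized} playing exactly the role Proposition~1 of that paper plays there; this is why the equivalence carries over verbatim once representability of product measures over $\mathcal{S}$ is established.
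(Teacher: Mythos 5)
Your proposal is correct and follows essentially the same route as the paper: the paper's proof (deferred to Appendix~\ref{sec:proofs_analoguous}) establishes the equivalence by putting feasible moment vectors in correspondence with convex combinations of product measures supported on $\mathcal{S}$, using Proposition~\ref{prop:measure_extension_generalized} exactly as your second inequality does, with Dirac-type moments supplying the other direction. Your write-up simply makes explicit the rescaling/zero-mass bookkeeping that the paper inherits verbatim from the unconstrained argument of~\cite{Letourneau2023unconstrained}, and it handles that normalization issue correctly.
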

\begin{restatable}{theorem}{globalmin}
    \label{thm:globalmin}
    Let $L=2$, and let
    $$ \mu := \left \{  (\mu_1^{(l)}, ..., \mu_D^{(l)})  \right \}_{l=1}^{2} \in \mathbb{R}^{(2d+1) \times D \times 2 } $$ 
    be a feasible point of Problem~\ref{eq:reformulationcontinuous_generalized}. This point corresponds to a global minimum of Problem \ref{eq:problemcontinuous} if and only if for any $l \in \{0, 1\}$ such that $\alpha^{(l)} := \prod_{i=1}^D \mu_{i,0}^{(l)} \not = 0$,
    the point,
    \begin{eqnarray}
        \left \{  \left ( \frac{\mu_1^{(l)}}{\alpha^{(l)}}, \mu_2^{(l)}, ..., \mu_D^{(l)} \right ) ,  (0,...,0)  \right \}
    \end{eqnarray}
    is a local minimum. The corresponding global minimum of Problem \ref{eq:problemcontinuous} corresponds to the sum of product measures with such moments following Proposition \ref{prop:measure_extension_generalized}.
\end{restatable}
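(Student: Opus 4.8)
The plan is to reduce the biconditional to two ingredients: a mass-decomposition identity for the objective, and a \emph{no spurious local minima} lemma asserting that every local minimum of Problem~\ref{eq:reformulationcontinuous_generalized} with $L=2$ attains the global value $p^{\ast} := \min_{x\in\mathcal S}p(x)$. The starting observation is that the moment-matrix and algebraic constraints are invariant under multiplying any single factor $\mu_i^{(l)}$ by a positive scalar, since positive scaling preserves positive semidefiniteness and $\gamma^{(j)}\cdot\phi(\mu^{(l)})$ is multilinear in the factors. Hence, after rescaling each factor of slot $l$ to unit zeroth moment, Proposition~\ref{prop:measure_extension_generalized} applies and realizes slot $l$ as $\alpha^{(l)}$ times a genuine Borel probability measure $\rho^{(l)}$ supported on $\mathcal S$, where $\alpha^{(l)}=\prod_i\mu_{i,0}^{(l)}$. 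Under this correspondence the objective equals $\sum_n p_n\phi_n(\mu)=\sum_l\alpha^{(l)}\int_{\mathcal S}p\,\mathrm d\rho^{(l)}$, the normalization $\phi_{(0,\dots,0)}(\mu)=1$ reads $\alpha^{(1)}+\alpha^{(2)}=1$, and the normalized point $\{(\mu_1^{(l)}/\alpha^{(l)},\mu_2^{(l)},\dots,\mu_D^{(l)}),(0,\dots,0)\}$ represents exactly $\rho^{(l)}$ in the first slot and the zero measure in the second.

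The mass-decomposition step is then immediate: any probability measure on $\mathcal S$ integrates $p$ to at least $p^{\ast}$, so
\[
\sum_n p_n\phi_n(\mu)=\sum_l\alpha^{(l)}\int_{\mathcal S}p\,\mathrm d\rho^{(l)}\ \ge\ \Big(\sum_l\alpha^{(l)}\Big)p^{\ast}=p^{\ast},
\]
with equality if and only if every $\rho^{(l)}$ (with $\alpha^{(l)}\neq0$) is supported on $\arg\min_{\mathcal S}p$. By Theorem~\ref{thm:equivalence}, $p^{\ast}$ is also the global value of the reformulation, so $\mu$ is a global minimum exactly when each $\rho^{(l)}$ integrates $p$ to $p^{\ast}$, i.e.\ when each normalized slot is itself a global minimizer of the reformulation.

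Next I would prove the lemma by exhibiting a feasible descent path, which is exactly where $L=2$ is used. Take a normalized single product measure $\rho$ (mass $1$) in the first slot with the second slot equal to the zero tuple, and suppose $\int p\,\mathrm d\rho>p^{\ast}$. Fixing a global minimizer $x^{\ast}\in\mathcal S$, define the path that scales the first factor of slot one by $(1-s^D)$ and sets every factor of slot two to $s\,\delta_{x_i^{\ast}}$ for $s\in[0,\epsilon)$; in measure terms this is $(1-s^D)\rho+s^D\delta_{x^{\ast}}$, and at $s=0$ it returns the all-zero second slot, so the path is continuous in moment space and passes through the point in question. Feasibility is routine and is where the new algebraic constraint enters: the SDP constraints are preserved because nonnegative scaling keeps matrices positive semidefinite and each $\delta_{x_i^{\ast}}$ produces rank-one positive semidefinite moment and (using $x_i^{\ast}\in[-1,1]$) localizing matrices; the normalization holds as the two masses are $1-s^D$ and $s^D$; and $\gamma^{(j)}\cdot\phi(\mu^{(l)})=0$ survives on each slot because it scales linearly and because $\gamma^{(j)}\cdot\phi(\delta_{x^{\ast}})=(g^{(j)}(x^{\ast}))^2=0$ as $x^{\ast}\in\mathcal S$. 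Along the path the objective is $\int p\,\mathrm d\rho-s^D\big(\int p\,\mathrm d\rho-p^{\ast}\big)$, strictly below $\int p\,\mathrm d\rho$ for all $s>0$; thus feasible points with strictly smaller value exist arbitrarily close, and the point is not a local minimum. Contrapositively, a local minimum must satisfy $\int p\,\mathrm d\rho=p^{\ast}$.

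Finally I would assemble the biconditional. If $\mu$ is a global minimum, the decomposition forces each $\rho^{(l)}$ to attain $p^{\ast}$, hence to be a global — and in particular a local — minimum, giving the asserted local minimality of each $\{(\mu_1^{(l)}/\alpha^{(l)},\dots),(0,\dots,0)\}$. Conversely, if each normalized slot is a local minimum, the lemma yields $\int p\,\mathrm d\rho^{(l)}=p^{\ast}$, whence $\sum_n p_n\phi_n(\mu)=\sum_l\alpha^{(l)}p^{\ast}=p^{\ast}$ and $\mu$ is a global minimum (corresponding via Theorem~\ref{thm:equivalence} to a global minimum of Problem~\ref{eq:problemcontinuous}); the support claim follows since optimality forces each $\rho^{(l)}$ onto $\arg\min_{\mathcal S}p$, whose atoms are the minimizers recovered through Proposition~\ref{prop:measure_extension_generalized}. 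I expect the lemma to be the crux: the two delicate points are continuity of the descent path at the degenerate all-zero representation of the second slot — handled by scaling all $D$ factors simultaneously, so the objective decreases only at order $s^D$ yet still rules out local minimality — and preservation of the \emph{new} algebraic constraints along the path, which is the one genuinely new verification relative to~\cite{Letourneau2023unconstrained}.
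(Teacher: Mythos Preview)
Your proposal is correct and follows essentially the same approach as the paper, which defers the argument to \cite{Letourneau2023unconstrained} after replacing the feasible sets $\mathcal{F}_L,\mathcal{S}_L$ by their constrained analogues $\tilde{\mathcal{F}}_L,\tilde{\mathcal{S}}_L$ and invoking Proposition~\ref{prop:measure_extension_generalized} in place of its unconstrained counterpart. You have in fact spelled out the mass-decomposition and descent-path argument that the paper leaves to the reference, and correctly isolated the single genuinely new verification---that the algebraic constraints $\gamma^{(j)}\cdot\phi(\mu^{(l)})=0$ survive along the path because $g^{(j)}(x^\ast)=0$---which is precisely the point the paper's proof sketch gestures at.
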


The proof of these theorems is entirely analogous to those found \cite{Letourneau2023unconstrained}. A discussion on how to (trivially) adapt the latter to the case at hand can be found in Appendix \ref{sec:proofs_analoguous}.

\subsection{Semi-Algebraic Constraints}
Though we do not explicitly deal with semi-algebraic constraints in this paper, this section briefly discusses the treatment of such constraints within our framework. Semi-algebraic constraints are those of the form,
\begin{eqnarray}
    h^{(k)} (x) \geq 0,
\end{eqnarray}
where $h^{(k)} \in \mathbb{P}_{d,D}$ are some polynomials for $k=1,...,K < \infty$. Polynomial optimization problems involving such constraints take the form,
\begin{equation}\begin{aligned}
    \label{eq:problemcontinuoussemi}
    \min_{x \in [-1,1]^D} & \;\; p(x) \\
    \mathrm{subject\, to} & \;\; g^{(j)}(x) = 0,\, j=1,...,J \\
    & \;\; h^{(k)}(x) \geq 0,\, k=1,...,K .
\end{aligned}\end{equation}
One simple and efficient way to address this problem is to convert it to a problem of the form \ref{eq:problemcontinuous} through the introduction of $K$ slack variables $\{ y_k \}_{k=1}^K \in \mathbb{R} $. In this case, one may rewrite Problem \ref{eq:problemcontinuoussemi} as,
\begin{equation}
\begin{aligned}
\label{eq:problemcontinuoussemiequiv}
    \min_{x \in [-1,1]^D, y\in \mathbb{R}^K} & \;\; p(x) \\
    \mathrm{subject\, to} & \;\; g^{(j)}(x) = 0,\, &j=1,...,J \\
    & \;\; h^{(k)}(x) - y_k^2 = 0,\, &k=1,...,K, 
\end{aligned}
\end{equation}
transforming it into the prescribed form discussed in this paper.

\section{Algorithm and Implementation}
\label{sec:implementation}
In this section, we briefly describe how we numerically solve the generalized reformulated Problem~\ref{eq:reformulationcontinuous_generalized}, presented in Section~\ref{sec:reformulation_general}. The reformulation consists of a nonlinear semidefinite program; i.e., an optimization problem with a nonlinear objective, semi-definite constraints and additional linear and nonlinear scalar constraints. We now describe how we handle the semidefinite constraints, followed by details on our numerical implementation.

\subsection{Handling Semidefinite Constraints}
Semi-definite constraints are denoted by: $\mathcal{M} \succeq 0$. Within our implementation, we handle semi-definite constraints  using the Burer-Monteiro (BM) method \cite{burer2003nonlinear}, enforcing them through equality constraints of the form: $\mathcal{M}_d = XX^T$ for some $X \in \mathbb{R}^{(d+1) \times r}$. In our case, the rank $r$ was generally chosen to be $r=d+1$ (full rank). The use of the BM method comes at the cost of introducing (second-order) nonlinearities, but such a reformulation does not change the feasible set when the rank is chosen as such; indeed, Boumal et al.~\cite{boumal2016non} have shown that when the rank of a solution is sufficiently small, every second-order stationary point of a SDP treated through the BM framework corresponds to a global minimum.

With this choice, the reformulation becomes
\begin{align}
\begin{split}
\label{eq:reformulationcontinuous_generalized_BM}
    \min_{  \mu \in \mathbb{R}^{(d+1)\times D \times L} }&\;\; \sum_{n \in \mathrm{supp}(p)} \, p_n \, \phi_n (\mu)  \\
    \mathrm{subject\, to} &  \;\; \mathcal{M}_d (\mu^{(l)}_i) = X^{(l)}_i {X_i^{(l)}}^T  \\
    &\;\; \mathcal{M}_{d-1} (\mu^{(l)}_i; \, 1-x_i^2) = Y^{(l)}_i {Y_i^{(l)}}^T\\
    &\;\;  X^{(l)}_i \in \mathbb{R}^{ (d+1) \times (d+1) }, \; Y^{(l)}_i \in \mathbb{R}^{ d \times d } \\
    &\;\; \gamma^{(j)} \cdot \phi(\mu) = 0   \;\;\;\;\;  \;\;\;\;\;\;\;\;\;\;\;\;\;\; i=1,..., D, l=1,..., L , j=1,...,J \\
    &\;\; \phi_{(0,...,0)}  (\mu)  = 1,  \\
    & \;\;  \phi_n (\mu)  = \sum_{l=1}^L  \prod_{i=1}^D \mu^{(l)}_{i,n_i}. 
\end{split} 
\end{align}
 
This problem now only contains scalar equality constraints, making it more amenable to existing numerical solvers. Table \ref{tab:cost} describes the computational cost associated with the evaluation of the objectives and constraints. Note that this cost is \emph{at most polynomial in dimension and degree}.

\begin{table}[htbp!]
    \centering
    \begin{tabular}{|c|c|c|c|} \hline
         Expression  &   Size & Eval. Cost \\ \hline
         $\sum_{n \in \mathrm{supp}(p)} \, p_n \, \phi_n (\mu) $  & $1$ & $ O\left ( N(p) \, L \, D \right )$  \\ \hline
         $ \mathcal{M}_d \left ( \mu^{(l)}_i \right ) $ &  $(d+1) \times (d+1)$ & $  O\left ( L \, D \, d^2 \right )$  \\ \hline
         $ \mathcal{M}_{d-1} \left (\mu^{(l)}_i; \, 1-x_i^2 \right )    $  & $ d \times d$ & $  O\left ( L \, D \, d^2 \right )$  \\ \hline
         $ X_i^{(l)} {X_i^{(l)}}^T $, $ Y_i^{(l)} {Y_i^{(l)}}^T $  & $ O (d) \times O(d)$ & $  O\left ( D\, d^3 \right )$  \\ \hline
         $  \sum_{l=1}^L \prod_{i=1}^D \mu^{(l)}_{i,0} - 1 $ &  $1$ & $  O\left ( L \, D \right )$  \\ \hline
          $ \gamma^{(j)} \cdot \phi(\mu)$ & $J$ & $  O\left (  \max_j N^2 ( g^{(j) } ) \, J \, D \right )$   \\ \hline
    \end{tabular}
    \caption{Size and evaluation costs of all parts of each component of the proposed generalized reformulation.}
    \label{tab:cost}
\end{table}

\subsection{Numerical Implementation Details}
To solve this problem numerically, we have implemented a solver in the \texttt{Julia} programming language~\cite{bezanson2012julia}. For optimization purposes, we used the \texttt{Nonconvex.jl} package~\cite{Tarek2023nonconvex}, which provides a convenient interface for connecting to many backend solvers and is well-suited for the treatment of nonlinear equalities and inequalities.

For the backend solver, we chose to use the \texttt{ipopt} algorithm implementation~\cite{wachter2006implementation} with first-order approximation. We used default parameter values, with the exception of the overall convergence tolerance, which corresponds to the maximum scaled violation under the KKT formulation (i.e., Eq.(6),~\cite{wachter2006implementation}), that we set between $\epsilon = 10^{-2}$ and $\epsilon = 10^{-1}$. 

Polynomials found in the objectives and the constraints were represented using \texttt{Julia}'s \texttt{DynamicPolynomials.jl}  package~\cite{legat2021polynomials}, which offers an efficient symbolic way of expressing such functions. Finally, to compute the gradient information required by the \texttt{ipopt} algorithm, we \texttt{Julia}'s \texttt{Forwarddiff.jl} package~\cite{RevelsLubinPapamarkou2016}, which provides an efficient implementation of automatic differentiation. 

\section{Numerical experiments}
\label{sec:numerical}
In this section, we present numerical results demonstrating the performance of our proposed algorithm and software. To display the versatility and power of our technique, we consider two families of constrained polynomial optimization problems that highlight the hurdles often encountered using traditional descent methods, specifically (1) local minima, (2) multi-modality, as well as (3) disconnected and discrete feasible regions.  \emph{Our reformulation overcomes these challenges}.

The first family of problems is treated in Section~\ref{sec:numerical:annulus} and consists of a concave quadratic objective over a connected non-convex feasible (algebraic) set, resulting in four (4) local minima within the feasible region regardless of dimension (see Figure~\ref{fig:ellipticalannulus2d}). The second family of problems is discussed in Section~\ref{sec:numerical:disconnected} and involves a concave (non-convex) objective over a discrete feasible region consisting of exponentially many points (i.e., an irregular lattice, see Figure~\ref{fig:disjoint_patches2D}).

To demonstrate the difference between the traditional and reformulated approaches, we utilize a single nonlinear interior point solver (\texttt{ipopt}) to solve the problem both in its original form (Equation~\eqref{eq:problemcontinuous}) as well as its reformulated form (Equation~\eqref{eq:reformulationcontinuous_generalized_BM}). We demonstrate that, while the original formulation suffers from non-convexity and generally fails to find a global optimum, the reformulated approach consistently succeeds (Tables~\ref{table:ellipticalannulussolved} and \ref{table:patchessolved}).

\subsection{Elliptical Annulus}
\label{sec:numerical:annulus}
\paragraph{Problem setup} For our first numerical experiments, we consider the problem of minimizing a concave quadratic objective over a feasible region consisting of two concentric spheres as shown in Figure~\ref{fig:ellipticalannulus2d}. This problem can be expressed using algebraic constraints as
\begin{align}
    \label{eqn:ellipticalannulus}
    \min \;\; &p(x) := -\left ( x_1 - 0.1 \right )^2 \\
    \mathrm{s.\,t.} \;\; &   g(x) := \left ( \sum_{i=1}^D x_i^2 - 1 \right ) \, \left (   \sum_{i=1}^D x_i^2 - 0.5^2 \right) = 0.
\end{align}
In particular, note that the objective consists of a shifted concave quadratic function varying in the first dimension only. An illustration of the problem when $D = 2$ is presented in Figure~\ref{fig:ellipticalannulus2d}.
\begin{figure}[htbp!]
\centering
\includegraphics[width=.8\textwidth]{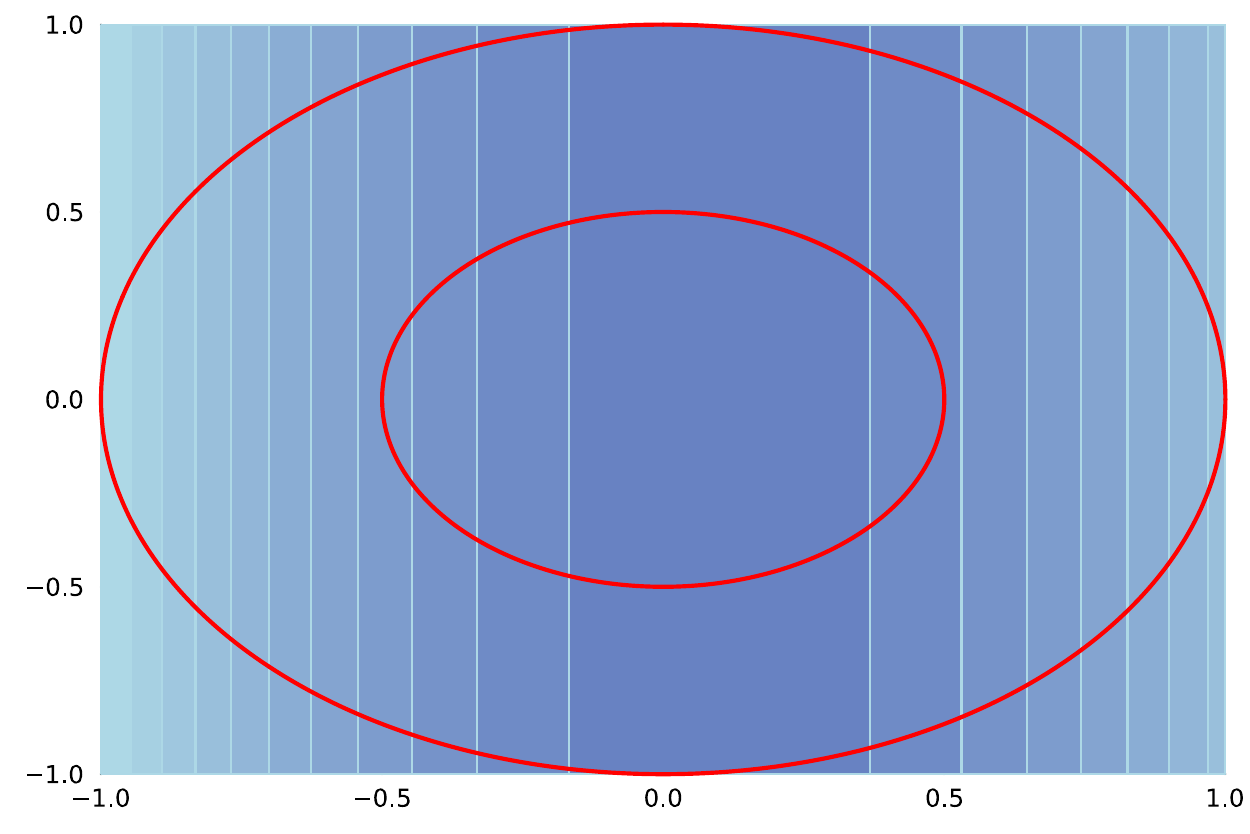}
    \caption{Representation of Problem \ref{eqn:ellipticalannulus} in two dimensions (2D). The feasible region consists of  non-convex spherical shells. The objective is concave in the first dimension, constant in remaining dimensions, and possesses four (4) local minima over the feasible set.}
    \label{fig:ellipticalannulus2d}
\end{figure}

In any given dimension $D$, the problem contains exactly four (4) local minima, only one of which is a global minimum located at $x^* = (-1, 0, \dots, 0)$ with value $p(x^*) = -1.21$.  Within the non-convex feasible region, there are four (4) basins of attractions (corresponding to each local minima) of roughly equal volume.  Given a starting point uniformly chosen within the feasible region, we thus expect traditional descent methods to succeed at most $25\%$ of the time.

\paragraph{Numerical results}
For our numerical experiments, we solved four (4) instances of Problem~\eqref{eqn:ellipticalannulus} with different random starting points for each dimension $D \in \{ 2, 3 \dots, 22 \}$. The results are reported below.

Figure~\ref{fig:elliptical-annulus-objerror} shows the average optimal objective value error (over $4$ instances) for each dimension under consideration. It is observed that the reformulation succeeds in finding the global minimum value in every instance; specifically, the relative error is on average under $10^{-4}$, well below the expected bound (using an overall convergence tolerance of $\epsilon = 10^{-2}$).
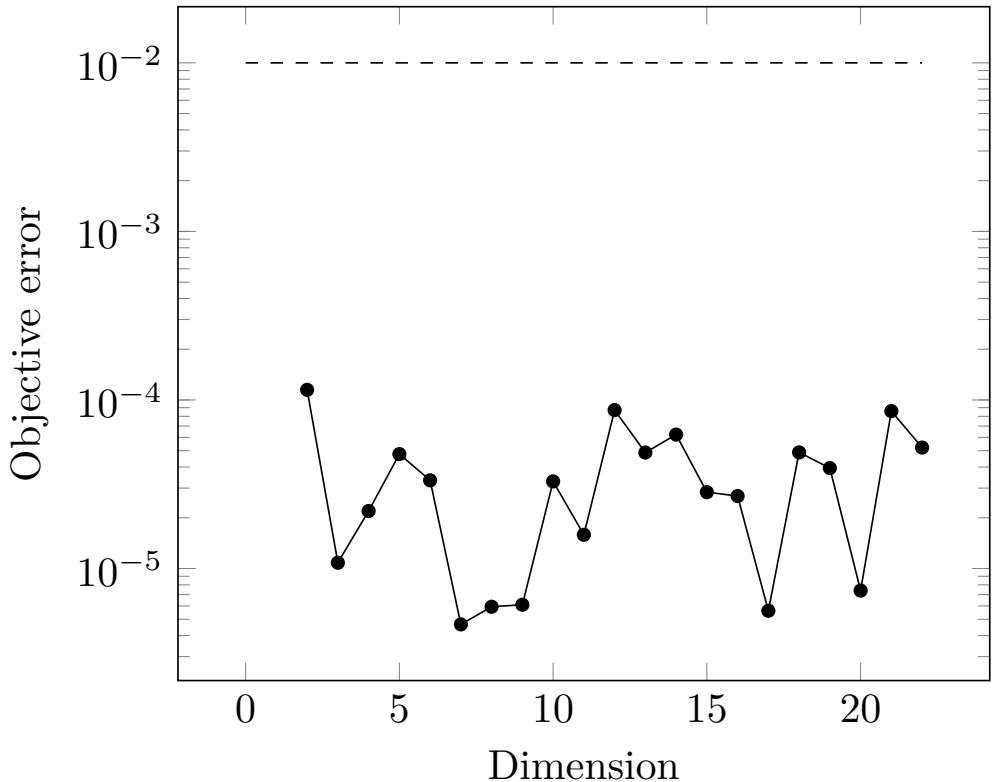
\begin{figure}[!htbp]
\begin{subfigure}{.8\textwidth}
    \centering 
\resizebox{\textwidth}{!}{\begin{tikzpicture}
\begin{axis}[
  xlabel=Dimension,
  ylabel=Objective error,
  ymode = log,
  legend = northwest,]

\addplot+[mark options={black, scale=0.75}, black] table[x=dimension,y=objectivefmgpsrelerrmean, col sep = comma] {data/paper-demo-ellipse-algebraic-summary-0-processed.csv};

 \addplot[mark=none, black, dashed, samples=2, domain = 0:22] {0.01};
\end{axis}
\end{tikzpicture}}
\end{subfigure}
\caption{Average relative errors in spherical shells problem. The error is on average approximately $10^{-4}$ and lies below the solver threshold ($10^{-2}$) regardless of the dimension $D$.}\label{fig:elliptical-annulus-objerror}
\end{figure}

The average wall time taken by our numerical solver (Section~\ref{sec:implementation}) for computing the solution of the aforementioned problems is shown in Figure~\ref{fig:elliptical-annulus-timings}. We observe polynomial scaling with dimension, which is in line with expectations (see Table~\ref{tab:cost}). Indeed, as discussed in Section~\ref{sec:reformulation_general}, the efficiency of our scheme is such that the reformulated problem lies in a space for which dimension grows slowly (linearly) with the dimension of the underlying problem. In this sense, the most computationally expensive operations involve the evaluation of the objective, the gradients, and the constraints. In this context, the cost scales as $O(D^5)$, which is nearly the observed scaling in Figure~\ref{fig:elliptical-annulus-timings}.
\begin{figure}[!htbp]
    \centering 
\begin{tikzpicture}
\begin{axis}[
    xtick={
        2,4,8,16
    },
          xticklabel={
        \pgfkeys{/pgf/fpu=true}
        \pgfmathparse{exp(\tick)}%
        \pgfmathprintnumber[fixed relative, precision=3]{\pgfmathresult}
        \pgfkeys{/pgf/fpu=false}
      },
  xlabel=Dimension,
  ylabel=Wall time (s),
  ymode = log,
  xmode = log,
  legend pos = north west,
  xmin = 2,
  xmax = 22
]

\addplot+[mark options={black, scale=0.75}, black] table[x=dimension,y=timesfmgpsmean, col sep = comma] {data/paper-demo-ellipse-algebraic-summary-0-processed.csv};
\addlegendentry{Reformulation}

\addplot[mark=none, black, dashed, samples=2, domain = 2:32] {x^(5)};
 \addlegendentry{$\mathcal{O}(D^{5})$}

\end{axis}
\end{tikzpicture}
\caption{Average wall times of solving reformulated elliptical annulus problem using our novel formulation (solid line). The dotted line indicates a scaling of $O(D^{5})$.}
\label{fig:elliptical-annulus-timings}
\end{figure}
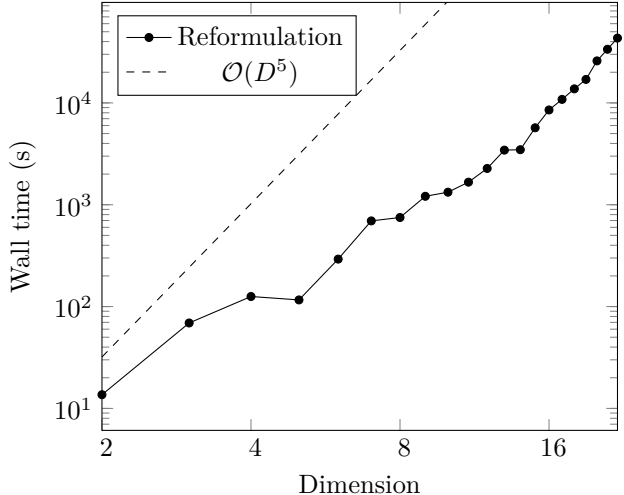

\paragraph{Comparison with original formulation}
\label{sec:comparison_formulations_ellipse}
To demonstrate the superior characteristics of our reformulation, we compare it to the results obtained using the original formulation. For this purpose, we leverage \texttt{ipopt}~\cite{wachter2006implementation}, an existing, robust solver commonly used for nonlinear optimization. This is specifically employed for solving the original formulation (Problem~\ref{eq:problemcontinuous}) directly within an overall convergence tolerance of $10^{-1}$, followed by utilizing the same solver for the reformulated problem (Problem~\ref{eq:reformulationcontinuous_generalized_BM}).
Table~\ref{table:ellipticalannulussolved} displays the percentage of problems for which the global optimum was found using our novel reformulation (center column) versus the original formulation (right column) as a function of dimension $D$. As shown, while the original formulation fails approximately $50\%$ of the time, our reformulation approach is consistently successful $100\%$ of the time in our experiments. As discussed, due to the problem's geometry possessing four basin of attractions around four local minima (only one of which is also global), this outcome is expected. In fact, this is a common problem that plagues all solver based on local descent when the problem is not convex or possesses more than one local minimum with different values. In this case, local descent solvers (including \texttt{ipopt}) may only find local minima which lie within the basin of attraction of the starting point. In our case, however, we leveraged a commonly-employed random initialization to derive different starting points for each of the $4$ runs of the same problem.  Moreover, while the existence of four basins of attractions of roughly equal sizes explains the low success rate of the original formulation, since the reformulated problem using our approach does not possess spurious local minima, we do not suffer from this common drawback.

\begin{table}[htbp!] \centering
\pgfplotstabletypeset[every even row/.style={
before row={\rowcolor[gray]{0.9}}},
every head row/.style={
before row=\toprule,after row=\midrule},
every last row/.style={
after row=\bottomrule}, col sep = comma,
	columns={dimension, [index]7, [index]6},
columns/dimension/.style={column name=$D$},
columns/fmgpssolvemean/.style={column name=Reformulation, 
        preproc/expr={100*##1},
        postproc cell content/.append style={
            /pgfplots/table/@cell content/.add={}{\%},
        }
        },
columns/ipoptsolvemean/.style={column name=Original,
preproc/expr={100*##1},
        postproc cell content/.append style={
            /pgfplots/table/@cell content/.add={}{\%},
        }}]{data/paper-demo-ellipse-algebraic-summary-0-processed.csv}
\caption{Proportion of problems for which our Reformulation approach (center) and the Original formulation (right) find the global minimum of the elliptic annulus problem (Problem~\ref{eqn:ellipticalannulus}) within an error of $10^{-2}$. Every problem was solved $4$ times using \texttt{ipopt} as the backend.}
\label{table:ellipticalannulussolved}
\end{table}


\subsection{Discrete Feasible Region}
\label{sec:numerical:disconnected}
\paragraph{Problem setup}
For our second numerical experiment, we chose a more difficult family of problems consisting of a concave objective and discrete feasible set of the form,
\begin{equation}
\begin{aligned}
\label{eqn:disjointpatches}
    \min_{x \in \mathbb{R}^D} & \qquad  p(x) := - \sum_{i=1}^D (x_i + 0.1)^2,  \\
    \textrm{s.t.} & \qquad  g^{(i)}(x) = \left (x_i + \frac{1}{3} \right ) \cdot x_i \cdot \left (x_i - \frac{2}{3} \right ) = 0\; \;  i=1, ..., D .
\end{aligned}
\end{equation}
The objective and feasible region are depicted in two dimensions in Figure~\ref{fig:disjoint_patches2D}. 
\begin{figure}[!htbp]
 \centering   \includegraphics[width=.75\textwidth]{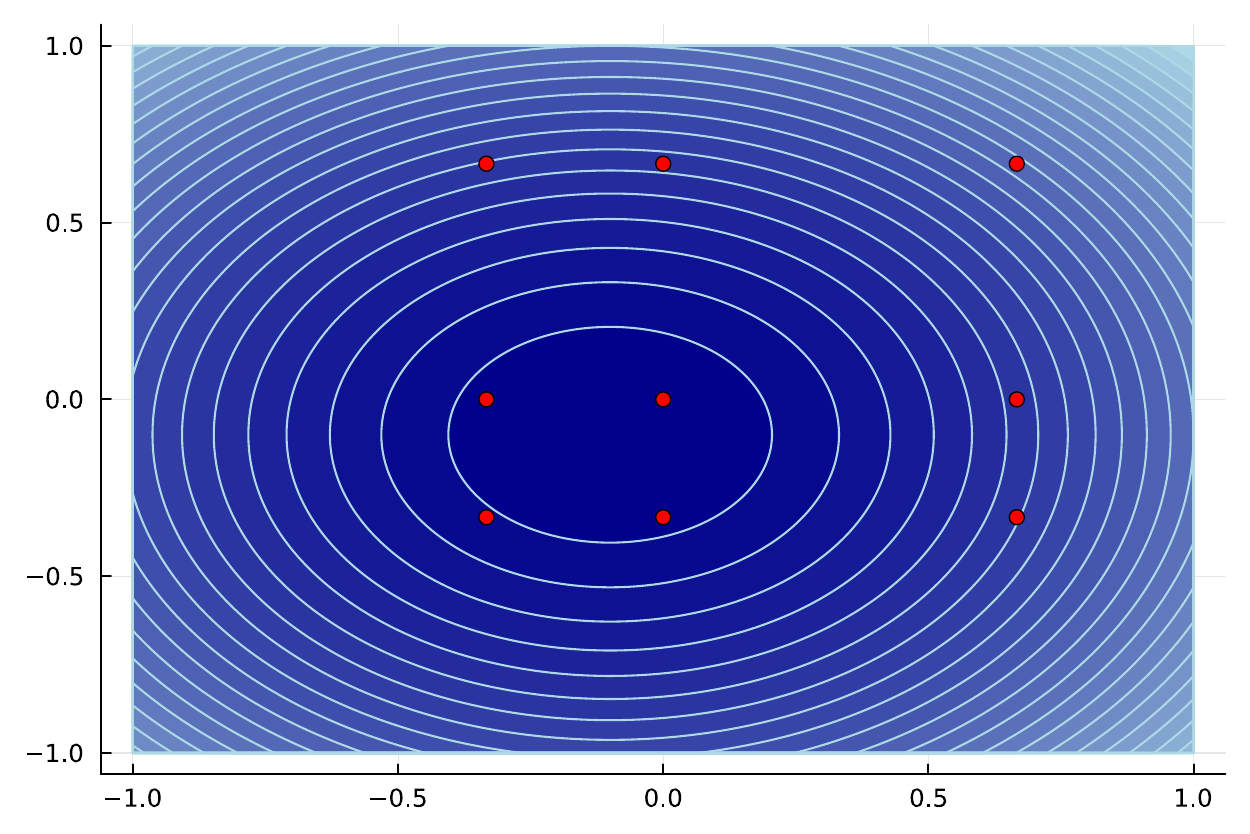}
    \caption{Two dimensional version of discrete non-convex optimization problem.}
    \label{fig:disjoint_patches2D}
\end{figure}
The objective is a concave quadratic function, and the feasible region is comprised of an exponential number of points (disconnected regions) with $3^D$ feasible points in dimension $D$. The true global minimum of Problem~\eqref{eqn:disjointpatches} is located at $x^* = \left( \frac{2}{3}, \frac{2}{3}, \dots, \frac{2}{3} \right )$ and has value: $p(x^*) = -\frac{529}{900} \cdot D$. 

\paragraph{Numerical results} We carried out numerical experiments where we solved four (4) instances of Problem~\eqref{eqn:disjointpatches} in both the reformulated (Problem~\ref{eq:reformulationcontinuous_generalized_BM}) and original (Problem~\ref{eq:problemcontinuous}) forms; different uniformly random points for dimensions $D \in \{ 2, \dots, 21 \}$ were generated as starting positions. We discuss results below.

Figure~\ref{fig:patches-objerror} shows the objective error found by the reformulated problem as a function of dimension. Note that the error is consistently below $10^{-1}$, within solver tolerance, demonstrating that our reformulation-based solver is indeed correct and solves Problem~\eqref{eqn:disjointpatches} in every instance regardless of dimension.

\begin{figure}[!htbp]
    \centering 
    \begin{subfigure}{.8\textwidth}
\resizebox{\textwidth}{!}{\begin{tikzpicture}
\begin{axis}[
  xlabel=Dimension,
  ylabel=Objective error,
  ymode = log,
  legend pos = south east,
  xmin = 1,
  xmax = 21]
\addplot+[mark options={black, scale=0.75}, black, error bars/.cd, 
        y dir=both, 
        y explicit] table[x=dimension,y=objectivefmgpsrelerrmean, 
        col sep = comma] {data/paper-demo-discrete-algebraic-summary-0-processed.csv};
 \addplot[mark=none, black, dashed, samples=2, domain = 0:21] {0.1};
\end{axis}
\end{tikzpicture}}
\end{subfigure}
\caption{Average relative objective errors in disjoint patches problem. The error is on average approximately $10^{-2}$ and lies below the solver threshold ($10^{-1}$) regardless of the dimension $D$.}
\label{fig:patches-objerror}
\end{figure}
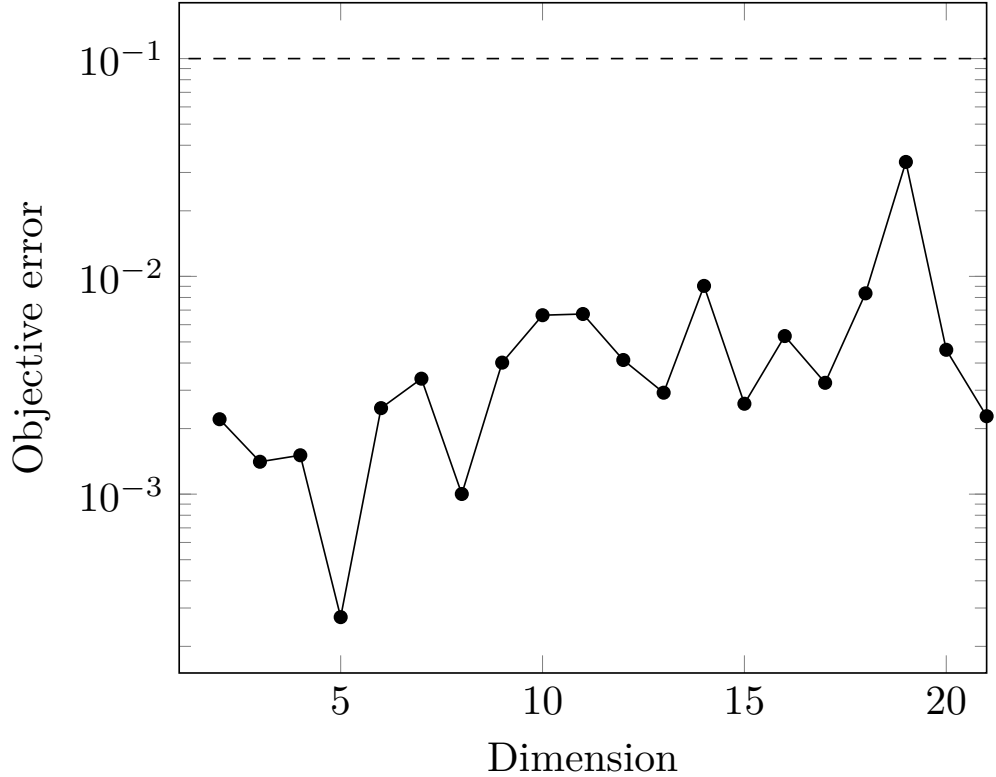

Next, Figure~\ref{fig:patches-timings} shows time complexity / scaling results. We observe polynomial scaling is achieved  as a function of dimension corresponding to $O(D^4)$. This is in line with the most computationally expensive operations in Table~\ref{tab:cost}; in this case, this corresponds to the evaluation of the objective, resulting in a total cost of $O(D^3)$ per iteration. 

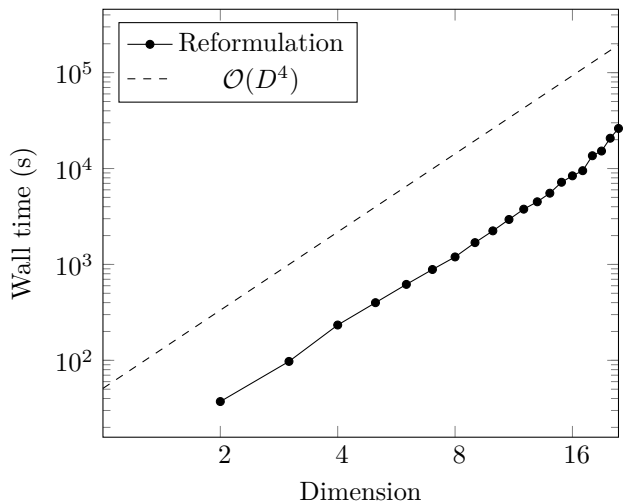
\begin{figure}[!htbp]
    \centering 
\begin{tikzpicture}
\begin{axis}[
    xtick={
        2,4,8,16
    },
      xticklabel={
        \pgfkeys{/pgf/fpu=true}
        \pgfmathparse{exp(\tick)}%
        \pgfmathprintnumber[fixed relative, precision=3]{\pgfmathresult}
        \pgfkeys{/pgf/fpu=false}
      },
  xlabel=Dimension,
  ylabel=Wall time (s),
  ymode = log,
  xmode = log,
  legend pos = north west,
  xmin = 1,
  xmax = 21
]

\addplot+[mark options={black, scale=0.75}, black] table[x=dimension,y=timesfmgpsmean, col sep = comma] {data/paper-demo-discrete-algebraic-summary-0-processed.csv};
 \addlegendentry{Reformulation}

\addplot[mark=none, black, dashed, samples=2, domain = 1:21] {50 + x^4};
 \addlegendentry{$\mathcal{O}(D^4)$}

\end{axis}
\end{tikzpicture}
\caption{Average wall times of solving reformulated disjoint patches problem using our novel formulation (solid line). The dotted line indicates a scaling of $O(D^4)$.}
\label{fig:patches-timings}
\end{figure}

\paragraph{Comparison of novel reformulation with original formulation}
In this section, we again compare the solutions to the reformulated Problem~\eqref{eq:reformulationcontinuous_generalized_BM} to the solution of the original Problem~\eqref{eq:problemcontinuous}), using the same process as described in Section~\ref{sec:comparison_formulations_ellipse} in the context of Problem~\ref{eqn:disjointpatches}. The results for this case are shown in Table~\ref{table:patchessolved}. 

We observe that, while the original formulation (right column) generally fails, our proposed reformulation (center column) succeeds in identifying the global minimum $100\%$ of the time.  We note that we observe some success for the original formulation in low dimension and consistent failure in higher dimension; this can be explained as follows: as previously discussed, the feasible region possesses exponentially (in dimension) many points, only one of which is a global minimum. To solve the problem in its original form, the interior point solver first computes a starting point in the original domain. With random initialization, we can expect the starting point to be more or less uniformly distributed over the discrete feasible set. Once such a point has been found, the interior point solver uses a local descent method to find a local minimum (and so terminates immediately). This implies that the global minimum will be found using the original formulation if and only if the starting point lies at the global minimum. However, based on the previous reasoning, this occurs with exponentially small probability in dimension. Thus, as the dimension grows, we expect the original formulation to fail consistently, while it has low, though non-trivial, probability of succeeding in lower dimensions (as observed).

These results empirically highlight the power of our reformulation in solving difficult non-convex problems for which current optimization techniques fail.

\begin{table}[htbp!]\centering
\pgfplotstabletypeset[every even row/.style={
before row={\rowcolor[gray]{0.9}}},
every head row/.style={
before row=\toprule,after row=\midrule},
every last row/.style={
after row=\bottomrule}, col sep = comma,
	columns={[index]0, [index]6, [index]7},
columns/dimension/.style={column name=$D$},
columns/fmgpssolvemean/.style={column name=Reformulation, 
        preproc/expr={100*##1},
        postproc cell content/.append style={
            /pgfplots/table/@cell content/.add={}{\%},
        }
        },
columns/ipoptsolvemean/.style={column name=Original,
preproc/expr={100*##1},
        postproc cell content/.append style={
            /pgfplots/table/@cell content/.add={}{\%},
        }}]{data/paper-demo-discrete-algebraic-summary-0-processed.csv}
\caption{Percentage of problems for which our reformulation (center) and the original formulation (right) find the global minimum of the disjoint patches problem (Problem~\ref{eqn:disjointpatches}) within an error of $10^{-1}$. Each problem was solved $4$ times using \texttt{ipopt} as a  backend.}
\label{table:patchessolved}
\end{table} 

\section{Conclusion}
\label{sec:conclusion}
We have introduced a novel reformulation of general constrained polynomial optimization as nonlinear problems with essentially no spurious local minima. With an implementation of the reformulated problem in the \texttt{Julia} programming language, we have tested and observed the correctness of the approach. Furthermore, using difficult, previously-intractable constrained polynomial optimization problems, we have presented evidence of superior performance and practicality compared to existing techniques.

From a theoretical standpoint, future work will target the treatment of stationary points (i.e., suboptimal points) at which, from a numerical standpoint, the reformulated objective may find itself in a feasible region with a relatively flat descent landscape. We also intend to consider performance improvements, both algorithmic and from a software optimization perspective. This includes parallelism on shared memory machines and GPUs accelerations on heterogeneous architectures.

\backmatter

\newpage

\newpage
\appendix
\begin{appendices}

\section{Proofs}
\label{sec:proofs}


The goal of this section is to prove Proposition~\ref{prop:measure_extension_generalized}, which requires a few intermediate results. To begin, let $D \in \mathbb{N}$, $x \in \mathbb{R}^D$ and $g(x)$ be some measurable function (in our case, a polynomial), $n \in \mathbb{N}$, and 
\begin{align}
    \mathcal{G} := \left \{ x \in \mathbb{R}^D \, : \, g(x) =0 \right \} \\
    A_n := \left \{ x \in \mathbb{R}^D \, : \, g^2(x) \geq \frac{1}{n} \right \}. 
    \label{eqn:An}
\end{align}
For all that follows, we assume that $\mathcal{G} \not = \emptyset$. 


\begin{lemma}
    \label{lemma:set_convergence}
    Consider the set $A_n$ defined in \eqref{eqn:An} and let $\mathcal{S} \subset \mathcal{G}^c$ be a measurable set. Then
    \begin{equation}
        A_n \cap \mathcal{S}  \uparrow_{n}^\infty \mathcal{S} 
    \end{equation}
    monotonically, and
    \begin{equation}
        \mu\left ( A_n \cap \mathcal{S} \right )  \rightarrow_{n}^\infty  \mu( \mathcal{S} ).
    \end{equation}
\end{lemma}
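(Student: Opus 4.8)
The plan is to prove this purely by elementary measure theory, in two steps: first establish the set-theoretic monotone convergence $A_n \cap \mathcal{S} \uparrow \mathcal{S}$, and then deduce the convergence of measures from continuity from below.

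For the first step, I would observe that $A_n \subseteq A_{n+1}$, since $g^2(x) \geq \frac{1}{n}$ implies $g^2(x) \geq \frac{1}{n+1}$; intersecting with the fixed set $\mathcal{S}$ preserves this inclusion, so $(A_n \cap \mathcal{S})_n$ is an increasing sequence of sets. Next I would compute the union: $\bigcup_{n=1}^\infty A_n = \{ x \in \mathbb{R}^D : g^2(x) > 0 \} = \{ x : g(x) \neq 0 \} = \mathcal{G}^c$, because $g^2(x) > 0$ if and only if $g^2(x) \geq \frac{1}{n}$ for some $n \in \mathbb{N}$. Since $\mathcal{S} \subseteq \mathcal{G}^c$ by hypothesis, $\bigcup_{n=1}^\infty (A_n \cap \mathcal{S}) = \mathcal{S} \cap \bigcup_{n=1}^\infty A_n = \mathcal{S} \cap \mathcal{G}^c = \mathcal{S}$. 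Combined with the monotonicity, this is exactly $A_n \cap \mathcal{S} \uparrow_n^\infty \mathcal{S}$. I would also note in passing that all sets involved are measurable: $g$ is measurable, hence $A_n = (g^2)^{-1}\bigl( [\tfrac{1}{n}, \infty) \bigr)$ is Borel, and $\mathcal{S}$ is measurable by assumption, so $A_n \cap \mathcal{S}$ is measurable.

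For the second step, I would simply invoke continuity from below of the measure $\mu$: for any measure and any increasing sequence of measurable sets $E_n \uparrow E$, one has $\mu(E_n) \to \mu(E)$. Applying this with $E_n = A_n \cap \mathcal{S}$ and $E = \mathcal{S}$ yields $\mu\left( A_n \cap \mathcal{S} \right) \to \mu(\mathcal{S})$, completing the proof.

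There is essentially no serious obstacle here; the only points requiring a modicum of care are verifying measurability of the $A_n$ (immediate from measurability of $g$) and recalling that continuity from below — unlike continuity from above — holds for arbitrary, not necessarily finite, measures, so no additional integrability or finiteness hypothesis on $\mu$ is needed.
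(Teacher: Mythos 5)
Your proof is correct and follows essentially the same route as the paper: establish that $A_n \cap \mathcal{S}$ increases to $\mathcal{S}$ (you via the explicit union $\bigcup_n A_n = \{g^2 > 0\} = \mathcal{G}^c$, the paper via a $\liminf$/$\limsup$ computation), then pass to the measure (you by continuity from below, the paper by the monotone convergence theorem applied to indicator functions, which is the same fact). If anything, your computation of $\bigcup_n A_n$ makes explicit a step the paper dismisses as ``by construction,'' so nothing is missing.
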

\begin{proof}
    First, we show that the set $A_n \cap \mathcal{S}$ converges to the set $\mathcal{S}$ by first considering
    \begin{align}
        \liminf_n A_n \cap \mathcal{S} &= \cup_{n \geq 1} \cap_{k \geq n} A_k \cap \mathcal{S} \\
        &= \cup_{n \geq 1} A_n \cap \mathcal{S} \\
        &= \mathcal{S}
    \end{align}
    by construction of the sets. Similarly,
    \begin{align}
        \limsup_n A_n \cap \mathcal{S} &= \cap_{n \geq 1} \cup_{k \geq n} A_k \cap \mathcal{S} \\
        &= \cap_{n \geq 1} \mathcal{S} \\
        &= \mathcal{S},
    \end{align}
    demonstrating convergence. The fact that the convergence is monotonic follows from $A_m \subset A_n$ for all $n \geq m$ by definition. In particular, this implies point-wise monotonic convergence of the following indicator functions,
    \begin{equation}
        \mathbb{I}_{A_n \cap S} (x) \uparrow_{n=1}^\infty \mathbb{I}_{ S} (x).
    \end{equation}
    Therefore, the monotone convergence theorem implies that
    \begin{equation}
        \mu\left ( A_n \cap S\right ) =  \int \mathbb{I}_{A_n \cap S} (x) \, \mathrm{d} \mu(x)   \rightarrow_{n}^\infty  \int \mathbb{I}_{S} (x) \, \mathrm{d} \mu(x)   =  \mu\left (  S\right ) ,
    \end{equation}
    and the result follows.
\end{proof}

The following proposition shows a sufficient condition for the support of the measure to be constrained to an algebraic variety. The rest of the reformulation is a way of encoding this sufficient constraint. 
\begin{proposition}
    \label{prop:positivepolysupport}
     Let $\mu(\cdot)$ be a finite measure supported on $\mathbb{R}^D$ and assume that,
     \begin{equation}
         \int g^2(x) \, \mathrm{d} \mu(x) = 0.
     \end{equation}
     Then,
     \begin{equation}
         \mathrm{supp}(\mu(\cdot)) \subset \left \{ x \in \mathbb{R}^D \, : \, g(x) =0 \right \} =: \mathcal{G}.
     \end{equation}
\end{proposition}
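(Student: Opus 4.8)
The plan is to reduce the statement to the elementary fact that a nonnegative integrand with vanishing integral must vanish almost everywhere, packaged through the exhaustion $A_n \uparrow \mathcal{G}^c$ provided by Lemma~\ref{lemma:set_convergence}. Concretely, I would first establish that the complement $\mathcal{G}^c$ carries no mass, i.e. $\mu(\mathcal{G}^c) = 0$, and then read off $\mathrm{supp}(\mu(\cdot)) \subset \mathcal{G}$ directly from the definition of the support as (the complement of) a maximal null set.

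For the first step, note that $A_n \subset \mathcal{G}^c$ for every $n$, since on $A_n$ we have $g^2(x) \geq \tfrac{1}{n} > 0$ and hence $g(x) \neq 0$. Using monotonicity of the integral together with the pointwise lower bound $g^2 \geq \tfrac{1}{n}$ on $A_n$,
\[
0 \;=\; \int_{\mathbb{R}^D} g^2(x)\,\mathrm{d}\mu(x) \;\geq\; \int_{A_n} g^2(x)\,\mathrm{d}\mu(x) \;\geq\; \frac{1}{n}\,\mu(A_n),
\]
so $\mu(A_n) = 0$ for each $n$. Applying Lemma~\ref{lemma:set_convergence} with $\mathcal{S} := \mathcal{G}^c$ (which is measurable because $g$ is) gives $A_n = A_n \cap \mathcal{G}^c \uparrow \mathcal{G}^c$ and $\mu(A_n) = \mu(A_n \cap \mathcal{G}^c) \to \mu(\mathcal{G}^c)$; combined with $\mu(A_n) = 0$ this forces $\mu(\mathbb{R}^D \setminus \mathcal{G}) = \mu(\mathcal{G}^c) = 0$. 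Finally, since $g$ is a polynomial it is continuous, so $\mathcal{G} = g^{-1}(\{0\})$ is closed and therefore a legitimate candidate set whose complement is $\mu$-null; hence $\mathrm{supp}(\mu(\cdot)) \subset \mathcal{G}$.

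I do not expect a genuine obstacle here: the argument is a routine application of the quantitative Markov-type bound on the sets $A_n$ followed by continuity from below of $\mu$ (which is exactly what Lemma~\ref{lemma:set_convergence} supplies). The only point warranting a line of care is the passage from "$\mu(\mathcal{G}^c)=0$" to "$\mathrm{supp}(\mu(\cdot)) \subset \mathcal{G}$", which relies on $\mathcal{G}$ being closed so that it may serve in the defining minimality property of the support; this is automatic in our setting since $g$ is polynomial, but would require the mild hypothesis of continuity of $g$ in the general measurable case.
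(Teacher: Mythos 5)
Your proof is correct and uses essentially the same ingredients as the paper --- the sets $A_n$, Lemma~\ref{lemma:set_convergence}, and the lower bound $g^2 \geq \tfrac{1}{n}$ on $A_n$ --- only organized directly (the Markov-type bound gives $\mu(A_n)=0$, then the lemma applied with $\mathcal{S}=\mathcal{G}^c$ gives $\mu(\mathcal{G}^c)=0$) rather than by contradiction against an arbitrary positive-measure subset of $\mathcal{G}^c$, as the paper does. Your closing remark that $\mathcal{G}$ is closed (by continuity of the polynomial $g$), so that $\mu(\mathcal{G}^c)=0$ legitimately yields $\mathrm{supp}(\mu(\cdot))\subset\mathcal{G}$, addresses a point the paper leaves implicit and is handled correctly.
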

\begin{proof}
    Proceed by contradiction. Assuming the statement does not hold, there must exist a measurable set $\mathcal{S} \subset \mathcal{G}^c$ such that
    \begin{equation}
        \mu\left ( \mathcal{S} \right ) > 0.
    \end{equation}
    Consider the integral,
    \begin{align}
         \int_{\mathbb{R}^D} g^2(x) \, \mathrm{d} \mu(x) &=  \int_{\mathcal{S}^c} g^2(x) \, \mathrm{d} \mu(x) +  \int_{A_n \cap \mathcal{S}} g^2(x) \, \mathrm{d} \mu(x)  + \int_{A_n^c \cap \mathcal{S}} g^2(x) \, \mathrm{d} \mu(x) \\
         &\geq \int_{A_n \cap \mathcal{S}} g^2(x) \, \mathrm{d} \mu(x) ,
    \end{align}
    for each $n \in \mathbb{N}$, where the inequality follows from the non-negativity. of the integrand Now, by Lemma \ref{lemma:set_convergence}, there exists $N \in \mathbb{N}$ such that for every $n \geq N$
    \begin{equation}
        \mu\left ( A_n \cap \mathcal{S} \right ) > \frac{\mu( \mathcal{S} )}{2} > 0.
    \end{equation}
    Fix $n=N$ and consider
    \begin{align}
        \int_{\mathbb{R}^D} g^2(x) \, \mathrm{d} \mu(x) &\geq \int_{A_N \cap \mathcal{S}} g^2(x) \, \mathrm{d} \mu(x)  \\
        &\geq  \frac{1}{N} \, \mu \left ( A_N \cap \mathcal{S} \right )\\
        &\geq \frac{1}{N} \, \frac{\mu( \mathcal{S} )}{2}\\
        & > 0,
    \end{align}
    where we used the definition of $A_N$ in the second inequality. This is a contradiction and therefore $\mu(\mathcal{S}) = 0$ for all measurable subsets of $\mathcal{G}^c$, i.e., $\mu(\cdot)$ must be supported on $\mathcal{G}$. 
\end{proof}

Finally, we recall the following result from \cite{Letourneau2023unconstrained},
\begin{proposition}[Proposition 1, \cite{Letourneau2023unconstrained}]
\label{prop:measureextension}
    Let $D,d \in \mathbb{N}$ and $( \mu_1, \mu_2, ..., \mu_D ) \in \mathbb{R}^{(2d+1) \times D}$ be such that for each $i = 1, ..., D$, $\mu_{i,0} = 1$, and
    \begin{align*}
        \mathcal{M}_d(\mu_i) &\succeq 0,\\
        \mathcal{M}_{d-1}(\mu_i;  1-x_i^2) &\succeq 0 .
    \end{align*}
    Then, there exists a regular Borel product measure,
    \begin{equation*}
        \mu( \cdot ) := \prod_{i=1}^D  \mu_i(\cdot), 
    \end{equation*}
    supported over $[-1,1]^D$ such that $\mu \left ( [-1,1]^D \right )  = 1$, and
    \begin{equation*}
         \int_{[-1,1]^D} \, x^n \, \left (  \prod_{i=1}^D \mu_i(x_i) \right ) \, \mathrm{d} x =  \prod_{i=1}^D \mu_{i,n_i}
    \end{equation*}
    for all multi-index $n \in \mathbb{N}^D$ such that $0 \leq n_i  \leq d$ for all $i$.
\end{proposition}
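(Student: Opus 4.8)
The plan is to combine the two building blocks already in hand: Proposition~\ref{prop:measureextension}, which manufactures a representing product measure from the two semidefinite conditions, and Proposition~\ref{prop:positivepolysupport}, which converts an integral identity $\int (g^{(j)})^2\,\mathrm{d}\mu = 0$ into a support containment. The linking observation is that the algebraic constraint~\eqref{eq:newconstr}, $\gamma^{(j)}\cdot\phi(\mu)=0$, is exactly the moment-space expression of $\int (g^{(j)}(x))^2\,\mathrm{d}\mu(x)=0$ for the product measure produced in the first step.

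First I would invoke Proposition~\ref{prop:measureextension}. Since $\mu_{i,0}=1$ and both $\mathcal{M}_d(\mu_i)\succeq 0$ and $\mathcal{M}_{d-1}(\mu_i;1-x_i^2)\succeq 0$ hold for every $i$, the proposition furnishes one-dimensional measures $\mu_i(\cdot)$ on $[-1,1]$ and their product $\mu(\cdot)=\prod_{i=1}^D\mu_i(\cdot)$, a probability measure on $[-1,1]^D$. This simultaneously delivers the final moment-matching claim of the statement (for $0\le n_i\le d$) and the normalization $\mu([-1,1]^D)=1$, so only the support containment remains to be established.

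The central step, and the one requiring care, is the degree bookkeeping. The coefficient vector $\gamma^{(j)}$ encodes $(g^{(j)})^2$, a polynomial of per-coordinate degree up to $2d$, whereas Proposition~\ref{prop:measureextension} literally asserts moment-matching of the product measure only up to per-coordinate degree $d$. The resolution --- and the reason the formulation retains $2d+1$ moments per variable (cf. the footnote in Section~\ref{sec:notation}) --- is that the two semidefinite conditions are precisely the truncated Hausdorff moment conditions on $[-1,1]$, which guarantee that each one-dimensional factor $\mu_i(\cdot)$ reproduces all of its prescribed moments $\mu_{i,0},\dots,\mu_{i,2d}$, not merely those up to order $d$. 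Because $\mu(\cdot)$ is a product measure and the monomial $x^n$ is bounded on the compact set $[-1,1]^D$, Fubini's theorem then gives $\int_{[-1,1]^D} x^n\,\mathrm{d}\mu(x)=\prod_{i=1}^D\mu_{i,n_i}=\phi_n(\mu)$ for every multi-index with $n_i\le 2d$. I would state this matching up to degree $2d$ explicitly, since it is what the remainder of the argument consumes.

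With matching available up to degree $2d$, I would expand $(g^{(j)}(x))^2=\sum_n\gamma^{(j)}_n x^n$ and integrate term by term, obtaining $\int_{[-1,1]^D}(g^{(j)}(x))^2\,\mathrm{d}\mu(x)=\sum_n\gamma^{(j)}_n\phi_n(\mu)=\gamma^{(j)}\cdot\phi(\mu)=0$ by hypothesis~\eqref{eq:newconstr}. Proposition~\ref{prop:positivepolysupport}, applied with $g=g^{(j)}$, then yields $\mathrm{supp}(\mu(\cdot))\subset\{x:g^{(j)}(x)=0\}$. Intersecting over $j=1,\dots,J$ and using that $\mu$ already lives on $[-1,1]^D$ gives $\mathrm{supp}(\mu(\cdot))\subset\mathcal{S}$, whence $\mu(\mathcal{S})=1$. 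I expect the only genuine obstacle to be the degree-$2d$ matching just described; once that is pinned down the two cited propositions assemble the result directly.
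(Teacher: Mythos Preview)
The proof you have written is not for the stated Proposition~\ref{prop:measureextension} at all --- it is a proof of Proposition~\ref{prop:measure_extension_generalized}. Your very first move is to \emph{invoke} Proposition~\ref{prop:measureextension} as an already-established building block, and you then combine it with Proposition~\ref{prop:positivepolysupport} and the hypothesis $\gamma^{(j)}\cdot\phi(\mu)=0$ (which you cite as~\eqref{eq:newconstr}) to obtain support containment in the algebraic set $\mathcal{S}$. But the statement you were asked to prove carries no $\gamma^{(j)}$ hypothesis and makes no claim about $\mathcal{S}$; it asserts only that the two semidefinite conditions on each coordinate yield a representing product measure on $[-1,1]^D$. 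By assuming exactly that in step one, your argument is circular with respect to the actual target.

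For Proposition~\ref{prop:measureextension} itself, the paper supplies no proof: it is quoted as Proposition~1 of \cite{Letourneau2023unconstrained} and used as a black box. A self-contained proof would proceed coordinate-by-coordinate via the one-dimensional truncated Hausdorff moment problem on $[-1,1]$ --- the conditions $\mathcal{M}_d(\mu_i)\succeq 0$ and $\mathcal{M}_{d-1}(\mu_i;1-x_i^2)\succeq 0$ being precisely the solvability conditions there --- and then form the product measure and appeal to Fubini for the multi-index moment identity. You gesture at this in your ``degree bookkeeping'' paragraph, but only as an aside to justify matching up to degree $2d$; it is never developed into an actual existence argument for the $\mu_i(\cdot)$.

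If instead you intended to prove Proposition~\ref{prop:measure_extension_generalized}, then your outline is correct and matches the paper's own proof of that result essentially line for line: apply Proposition~\ref{prop:measureextension}, identify $\gamma^{(j)}\cdot\phi(\mu)$ with $\int (g^{(j)})^2\,\mathrm{d}\mu$, apply Proposition~\ref{prop:positivepolysupport}, and intersect over $j$. The paper does not pause over the degree-$2d$ moment matching you flag, treating it as implicit in the construction.
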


We are now ready to prove the main technical proposition from Section~\ref{sec:reformulation_general}:

\measureextensiongeneralized*

\begin{proof}
    
To begin, following our hypotheses, Proposition~\ref{prop:measureextension} implies the existence of a product measure,
\begin{equation*}
    \mu( \cdot ) := \prod_{i=1}^D  \mu_i(\cdot), 
\end{equation*}
supported over $[-1,1]^D$ such that $\mu\left( [-1,1]^D \right ) =1$. Then, note that by construction, Equation~\eqref{eq:newconstr} is  equivalent to
\begin{eqnarray}
    \gamma^{(j)} \cdot \phi(\mu) = \int \left ( g^{(j)}(x) \right )^2 \, \mathrm{d} \mu(x) = 0.
\end{eqnarray}
Therefore, it follows from Proposition \ref{prop:positivepolysupport} that $\mu(\cdot)$ must be supported over
\begin{equation}
    \left \{ x \in [-1,1]^D \, : \, g^{(j)}(x) = 0 \right \}.
\end{equation}
However, since this is trus for every $j=1,...,J$, we conclude that $\mu(\cdot)$ must be supported on $\mathcal{S}$.

\end{proof}

\subsection{Proofs of Theorem \ref{thm:equivalence} and Theorem \ref{thm:globalmin} }
\label{sec:proofs_analoguous}

As mentioned in Section \ref{sec:reformulation_general}, the proof of Theorem \ref{thm:equivalence} and Theorem \ref{thm:globalmin} are entirely analogous to those of Theorem 2 and Theorem 3 from  \cite{Letourneau2023unconstrained}. Indeed, in the current context, it suffices to replace the sets $\mathcal{F}_L$ and $\mathcal{S}_L$ originally defined in \cite{Letourneau2023unconstrained} as
\begin{align}
\begin{split}
     \mathcal{F}_L :=
     \bigg\{   \bigg\{\mu_n:= & \sum_{l=1}^L \prod_{i=1}^D \mu_{i, n_i}^{(l)}  \bigg\}_{\lvert n \rvert \leq d}  \, : \, \{\mu_i^{(l)} \}_{i,l=1}^{D,L} \in \mathbb{R}^{(2d+1) \times D \times L},\, \\   
     & \, \mathcal{M}_d(\mu^{(l)}_i), \, \mathcal{M}_{d-1}(\mu^{(l)}_i ; (1-x_i^2)) \succeq 0 \; \forall \, i,l, \, \mu_{0} = 1 \bigg\},
\end{split}
\end{align}
and
\begin{align}
     \mathcal{S}_L &:= \left \{ \mu(\cdot) := \sum_{l=1}^L \prod_{i=1}^D  \mu_i^{(l)} (\cdot)  \, :  \,  \mu^{(l)}_i (\cdot) \in \mathbb{B}_1  \; \forall \; i,l, \, \mu\left( [-1,1]^D \right ) = 1\right \}
\end{align}
with sets of the form
\begin{equation}
    \tilde{\mathcal{F}}_L = \left\{ \bigg\{\mu_n:=  \sum_{l=1}^L \prod_{i=1}^D \mu_{i, n_i}^{(l)}  \bigg\}_{\lvert n \rvert \leq d}  \in \mathcal{F}_L \, : \, \gamma^{(j)} \cdot \phi(\mu^{(l)}) = 0 \; \forall \; j, l \right\}
\end{equation}
and
\begin{equation}
    \tilde{\mathcal{S}}_L = \left\{  \mu(\cdot) := \sum_{l=1}^L \prod_{i=1}^D  \mu_i^{(l)} (\cdot)  \in \mathcal{S}_L \, : \, \mathrm{supp}(\mu^{(l)}(\cdot) ) \subset \left \{ x \in [-1,1]^D : g^{(j)}(x) = 0 \right \} \; \forall \; j, l \right\},
\end{equation}
respectively, where $\tilde{\mathcal{F}}_L$ should be recognized as the feasible set of Problem \ref{eq:reformulationcontinuous_generalized} under the map $\phi(\cdot)$, while $
\tilde{\mathcal{S}}_L$ is a the set of normalized convex combinations of $L$ product measures supported over the algebraic set under consideration. Upon making such a substitution, the proof follows verbatim using Proposition \ref{prop:measure_extension_generalized} in lieu of Proposition 1 from \cite{Letourneau2023unconstrained} to establish a surjective relationship between $\tilde{\mathcal{S}}_L$ and $\tilde{\mathcal{F}}_L$.

\end{appendices}

\end{document}